\DeclareMathOperator{\fd}{FD}
\DeclareMathOperator{\fe}{FE}
\newcommand{\fdm}[1][]{%
	\ifthenelse{\equal{#1}{}}{%
		\mathcal{M}^{\fd,m}_{\alpha,N}%
		}{%
		\mathcal{M}^{\fd,m}_{#1,N}%
		}%
}
\newcommand{\fds}[1][]{%
	\ifthenelse{\equal{#1}{}}{%
		\mathcal{M}^{\fd}_{\alpha,N}%
	}{%
	\mathcal{M}^{\fd}_{#1,N}%
}%
}
\newcommand{\fem}[1][]{%
	\ifthenelse{\equal{#1}{}}{%
		\mathcal{M}^{\fe,m}_{\alpha,N}%
	}{%
	\mathcal{M}^{\fe,m}_{#1,N}%
}%
}
\newcommand{\fems}[1][]{%
\ifthenelse{\equal{#1}{}}{%
	\mathcal{M}^{\fe}_{\alpha,N}%
}{%
\mathcal{M}^{\fe}_{#1,N}%
}%
}
\newcommand{\femsp}{\mathcal{M}^{\fe,+}_{\alpha,N}}
\newcommand{\femsm}{\mathcal{M}^{\fe,-}_{\alpha,N}}
\newcommand\TheTitle{Fast solvers for two-dimensional
	fractional diffusion equations using
	rank structured matrices}
\newcommand\TheShortTitle{Fast solvers for 2D
	FDEs using
	rank structures}
\newcommand\TheAuthors{Stefano Massei, Mariarosa Mazza, Leonardo
	Robol} 
 \newcommand{\vect} {\mathrm{vec}}
\newcommand{\supp}{\mathrm{supp}} 
\headers{\TheShortTitle}{\TheAuthors}
\renewcommand{\le}{\leq}
\renewcommand{\ge}{\geq}
\newcommand{\norm}[1]{\lVert #1 \rVert}
\definecolor{lowrankcolor}{rgb}{.75,.75,.75}
\title{\TheTitle\thanks{%
	This work has been partially supported by an INdAM/GNCS
	project. The three authors are members of the research group GNCS. The work of the first author has been supported by the SNSF research project Fast algorithms from low-rank updates, grant number: 200020 178806. The work of the third author has been supported by the Region of Tuscany (PAR-FAS 2007 -- 2013) and by MIUR,
	the Italian Ministry of Education, Universities and Research (FAR) within the Call FAR -- FAS 2014
	(MOSCARDO Project: ICT technologies for structural monitoring of age-old constructions based
	on wireless sensor networks and drones, 2016 -- 2018).}}
\author{%
	Stefano Massei\thanks{EPF Lausanne, Switzerland,
		\email{stefano.massei@epfl.ch}} \and Mariarosa Mazza\thanks{Department of Science and High Technology, University of Insubria, Como, Italy, and Max Planck Institute for Plasma Physics, Munich, Germany, 
		\email{mariarosa.mazza@uninsubria.it}} \and Leonardo
	Robol\thanks{Department of Mathematics, University of Pisa, and 
		 ISTI-CNR, Pisa, Italy, \email{leonardo.robol@unipi.it}}}
\numberwithin{theorem}{section}
\renewcommand{\leq}{\leqslant}
\renewcommand{\geq}{\geqslant}
\renewcommand{\tilde}{\widetilde}
\ifpdf \hypersetup{ pdftitle={\TheTitle},
	pdfauthor={\TheAuthors} } \fi
\begin{document}
	\maketitle
	\begin{abstract}
		We consider the  discretization of time-space diffusion equations with fractional derivatives in space and either one-dimensional (1D) or 2D spatial domains. The use of an implicit Euler scheme in time and finite differences or finite elements in space leads to a sequence of dense large scale linear systems describing the behavior of the solution over a time interval.		
		We prove that the coefficient matrices arising in the 1D context are rank structured and can be efficiently represented using hierarchical formats ($\mathcal H$-matrices, HODLR). Quantitative estimates for the rank of the
		off-diagonal blocks of these matrices are presented. 		
		We analyze the use of HODLR arithmetic
		for solving the 1D case and we compare this strategy with
		existing methods that exploit the Toeplitz-like structure to precondition the GMRES iteration. The numerical tests demonstrate the convenience of the HODLR format when at least a reasonably low number of time steps is needed.
Finally, we explain how these properties can be leveraged to
		design fast solvers for problems with 2D spatial domains that can be reformulated as matrix equations. The experiments show that the approach
		based on the use of rank-structured arithmetic is particularly effective and outperforms current
		state of the art techniques.
	\end{abstract}

\begin{keywords}
	Fractional operators, Fractional diffusion,
	Sylvester equation, Hierarchical matrices, Structured matrices.
\end{keywords}

\begin{AMS}
	35R11, 15A24, 65F10.
	\end{AMS}

\section{Introduction}
	
	Fractional Diffusion Equations (FDEs) are a generalization of the classical partial diffusion equations obtained by replacing a standard derivative with a fractional one.
	In the last decade, FDEs have gained a lot of attention since they allow to model non-local behavior, e.g., enhanced diffusivity, which can be regarded as a realistic representation of specific physical phenomena appearing  in several applications.  In finance, this
	is used to  take long time correlations into consideration \cite{fin};
	in image processing, the use of fractional
	anisotropic diffusion allows to accurately
	recover images from their corrupted or noisy version --- without incurring
	the risks of over-regularizing the solution and thus losing significant part
	of the image such as the edges \cite{imag}. 	
	The applications in fusion plasma physics concern Tokamak reactors (like ITER currently under construction in the South of France \cite{toka})	which are magnetic toroidal confinement devices aiming to harvest energy from the fusion of small atomic nuclei, typically Deuterium and Tritium,  heated to the plasma state. Recent experimental and theoretical evidence indicates that transport in Tokamak reactors deviates from the standard diffusion paradigm. One of the proposed models that incorporates in a natural, unified way, the unexpected anomalous diffusion phenomena is based on the use of fractional derivative operators \cite{plasma}. The development of fast numerical tools for solving the resulting equations is then a key requirement for controlled thermonuclear fusion, which offers the possibility of clean, sustainable, and almost limitless energy.
	
	Let us briefly recall how a standard diffusion equation can be ``fractionalized'', by taking as an example the parabolic diffusion
	equation $\frac{\partial u(x,t)}{\partial t}=~d(x,t)\frac{\partial^2 u(x,t)}{\partial x^2}$, where $d(x,t)$ is the diffusion coefficient.
	Replacing the derivative in time with a fractional one leads to
	a \emph{time-fractional} diffusion equation; in this case, the
	fractional derivative order is chosen between $0$ and $1$. On the
	other hand, we can consider a \emph{space-fractional} diffusion equation
	by introducing a fractional derivative in space, with
	order between $1$ and $2$. The two approaches (which can also be combined),
	lead to similar computational issues. In this paper, we focus on the space fractional initial-boundary value problem
		\begin{eqnarray}\label{fde_FD}
		\left\{
		\begin{array}{lc}
		\frac{\partial u(x,t)}{\partial t}=d_{+}(x,t)\frac{\partial^\alpha u(x,t)}{\partial_{+} x^\alpha}+d_{-}(x,t)\frac{\partial^\alpha u(x,t)}{\partial_{-} x^\alpha}+f(x,t), & (x,t)\in(L,R)\times(0,T], \\
		u(x,t)=u(x,t)=0, & (x,t)\in\mathbb{R}\backslash(L,R)\times[0,T],\\
		u(x,0)=u_{0}(x), & x\in[L,R],
		\end{array}
		\right.
		\end{eqnarray}
		where $\alpha\in(1,2)$ is the fractional derivative order, $f(x,t)$ is the \emph{source term}, and the nonnegative functions $d_{\pm}(x,t)$ are the \emph{diffusion coefficients}. Three of the most famous definitions of the right-handed (--) and the left-handed (+) fractional derivatives in \eqref{fde_FD} are due to Riemann–Liouville, Caputo, and Gr\"unwald-Letnikov. We refer the reader to
		Section~\ref{sec:definitions} for a detailed discussion.

As suggested in \cite{deng}, in order to guarantee the well-posedness of a space-FDE problem, the value of the solution on $\mathbb{R}\backslash(L,R)$ must be properly accounted for. In this view, in \eqref{fde_FD} we fix the so-called \emph{ absorbing boundary conditions}, that is we assume that the particles are ``killed'' whenever they leave the domain $(L,R)$.

		Analogously, one can generalize high dimensional differential operators by applying (possibly different order) fractional derivatives in each coordinate direction. More explicitly, we consider the 2D extension of \eqref{fde_FD}
\begin{align}\label{fde_FD2d}
\begin{split}
		\frac{\partial u(x,y,,t)}{\partial t} &= d_{1,+}(x,t)\frac{\partial^{\alpha_1} u(x,y,t)}{\partial_+ x^{\alpha_1}} +d_{1,-}(x,t)\frac{\partial^{\alpha_1} u(x,y,t)}{\partial_- x^{\alpha_1}} \\
&+ d_{2,+}(y,t)\frac{\partial^{\alpha_2} u(x,y,t)}{\partial_+ y^{\alpha_2}} +d_{2,-}(y,t)\frac{\partial^{\alpha_2} u(x,y,t)}{\partial_- y^{\alpha_2}}+f(x,y,t).
\end{split}
\end{align}
with absorbing boundary conditions. Notice that the diffusion coefficients only depend on time and on the variable of the corresponding differential operator. This choice makes it easier to treat a 2D FDE problem as a matrix equation and to design fast numerical procedures for it (see Section \ref{sec:matrix-eq}).

\subsection{Existing numerical methods for FDE problems}
The non-local nature of fractional differential operators causes the absence of sparsity in the
coefficient matrix of the corresponding discretized problem. This makes FDEs computationally more demanding than PDEs. 		

Various numerical discretization methods for FDE problems, e.g., finite differences, finite volumes, finite elements have been the subject of many studies \cite{Meer1,TZD,ervin}. In the case of regular spatial domains, the discretization matrices often inherit a Toeplitz-like structure from the space-invariant property of the underlying operators. Iterative schemes such as  multigrid and preconditioned Krylov methods --- able to exploit this structure --- can be found in \cite{wang,LS,PS,M1,ng2016SCI,ng2016,DMS,2D}.
		For both one- and two-dimensional FDE problems, we mention the structure preserving preconditioning and the algebraic multigrid methods presented in \cite{DMS,2D}. Both strategies are based on the spectral analysis of the coefficient matrices via their spectral symbol. The latter is a function which provides a compact spectral description of the discretization matrices whose computation relies on the theory of Generalized Locally Toeplitz (GLT) matrix-sequences \cite{GS}.
		
		Only very recently, off-diagonal rank structures have been recognized in finite element discretizations \cite{zhao}. Indeed, Zhao et al. proposed the use of hierarchical matrices for storing the stiffness matrix combined with geometric multigrid (GMG) for solving the linear system.

		It is often the case that 2D problems with piecewise smooth right-hand sides have piecewise smooth solutions (see, e.g., \cite{caffarelli}). A possible way to uncover and  leverage this property is to rephrase the linear system in matrix equation form.
	This is done for instance in \cite{Breiten2014}, where the authors use the Toeplitz-like structure of the involved one-dimensional FDE matrices in combination with the extended Krylov subspace method to deal with fine discretizations.
	
	\subsection{Motivation and contribution}
		 In this paper, we aim to build a framework for
		analyzing the
		low-rank properties of one-dimensional FDE discretizations and
		to design fast algorithms for solving two-dimensional FDEs written in a matrix
		equation form. In detail, the piecewise smooth property of the right-hand side implies the low-rank structure in the solution of the matrix equation and enables the use of Krylov subspace methods \cite{Breiten2014}.
		This, combined with the technology of hierarchically rank-structured matrices such as
		$\mathcal{H}$-matrices and HODLR \cite{hackbusch2015hierarchical}, yields a linear-polylogarithmic computational
		 complexity in the size of the edge of the mesh.
		For instance, for a $N\times N$ grid we get a linear
		polylogarithmic cost in $N$, in contrast to $\mathcal
		O(N^2\log N)$ needed by a multigrid approach or a
		preconditioned iterative method applied to linear
		systems with dense Toeplitz coefficient matrices. Similarly, the storage consumption is reduced
		from $\mathcal O(N^2)$ to $\mathcal O(N\log N)$.
		The numerical experiments demonstrate that our approach,
		based on the HODLR format, outperforms the one proposed in
		\cite{Breiten2014}, although the asymptotic cost is comparable.
		From the theoretical side, we provide an analysis of the rank structure
		in the matrices coming from the discretization of fractional differential
		operators. Our main results claim that the off-diagonal blocks in these
		matrices have numerical rank $\mathcal O(\log(\epsilon^{-1})\log(N))$ where $\epsilon$ is the truncation threshold. We highlight that some of these
		results do not rely on the Toeplitz structure and apply to more general
		cases, e.g., stiffness matrices of finite element methods on non-uniform meshes.
		
		The use of hierarchical matrices for finite element discretization of FDEs has already been
		explored in the literature. For instance,
		the point of view in \cite{zhao} is similar to the
		one we take in Section~\ref{sec:anal2}; however,
		we wish to highlight two main differences with our
		contribution. First, Zhao et al. considered the adaptive
		geometrically balanced clustering \cite{geom-clus}, in place of the HODLR partitioning. 
As mentioned in the Section~\ref{sec:hodlr-fe}, there is only little difference between the off-diagonal ranks of the two partitionings, 
hence HODLR arithmetic turns out to be preferable because it reduces the storage consumption. 
		Second, they propose the use of geometric multigrid (GMG) for solving linear systems with the stiffness matrix. In the case of multiple time steps and for generating the extended Krylov subspace, precomputing the LU factorization is more convenient, as we discuss in Section~\ref{sec:exp}. To the best of our knowledge, the rank
		structure in finite difference discretizations of fractional
		differential operators has not been previously noticed.
		
		The paper is organized as follows; in Section~\ref{sec:definitions} we recall different definitions of fractional derivatives and the discretizations proposed in the literature. Section~\ref{sec:rank} is dedicated to the study of the rank structure arising in the discretizations. More specifically, in Section~\ref{sec:glt} a preliminary qualitative analysis is supported by the GLT theory, providing a decomposition of each off-diagonal block as the sum of a low-rank plus a small-norm term. Then, a quantitative analysis is performed in Sections~\ref{sec:anal1} and \ref{sec:anal2}, with different techniques stemming from the framework of structured matrices. In Section~\ref{sec:practice} we introduce the HODLR format and we discuss how to efficiently construct representations of the matrices of interest. Section~\ref{sec:solv-1D} briefly explains how to combine these ingredients to solve the 1D problem. In Section~\ref{sec:matrix-eq} we reformulate the 2D problem as a Sylvester matrix equation with structured coefficients and we illustrate a fast solver. The performances of our approach are tested and compared in Section~\ref{sec:exp}. Conclusion and future outlook are given in Section~\ref{sec:concl}.
			
		\section{Fractional derivatives and their discretizations}\label{sec:definitions}
		
		\subsection{Definitions of fractional derivatives}
		
		A common definition of fractional derivatives is given by the Riemann–Liouville formula. For a given function with absolutely continuous first derivative on $[L,R]$, the right-handed and left-handed Riemann–Liouville fractional derivatives of order $\alpha$ are defined by
		\begin{equation}\label{eq:frac-der}
		\begin{split}
		\frac{\partial^\alpha u(x,t)}{\partial^{RL}_{+} x^\alpha}&= \frac{1}{\Gamma(n-\alpha)}\frac{\partial^n}{\partial x^n}\int_L^x\frac{u(\xi,t)}{(x-\xi)^{\alpha+1-n}}d\xi,\\
		\frac{\partial^\alpha u(x,t)}{\partial^{RL}_{-} x^\alpha}&=\frac{(-1)^n}{\Gamma(n-\alpha)}\frac{\partial^n}{\partial x^n}\int_x^R\frac{u(\xi,t)}{(\xi-x)^{\alpha+1-n}}d\xi,
		\end{split} \end{equation}
		where $n$ is the integer such that $n-1<\alpha\le n$ and $\Gamma(\cdot)$ is the Euler gamma function. Note that the left–handed fractional derivative of the function $u(x,t)$ computed at $x$ depends on all function
		values to the left of  $x$, while the right–handed fractional derivative depends on the ones to the right.
		
		When $\alpha=m$, with $m\in\mathbb{N}$, then \eqref{eq:frac-der} reduces to the standard integer derivatives, i.e.,
		\begin{eqnarray*}
			%\begin{array}{cc}
			\frac{\partial^m u(x,t)}{\partial^{RL}_{+} x^m}=\frac{\partial^m u(x,t)}{\partial x^m}, \quad \frac{\partial^m u(x,t)}{\partial^{RL}_{-} x^m}=(-1)^m\frac{\partial^m u(x,t)}{\partial x^m}.
			%\end{array}
		\end{eqnarray*}
		An alternative definition is based on the
		Gr\"unwald--Letnikov formulas:	
		\begin{align}\label{Grun}
			\begin{split}
				\frac{\partial^\alpha u(x,t)}{\partial^{GL}_{+} x^\alpha}&=\lim_{\Delta x\rightarrow0^{+}}\frac{1}{\Delta x^\alpha}\sum_{k=0}^{\lfloor(x-L)/\Delta x\rfloor}g_k^{(\alpha)}u(x-k\Delta x,t),\\
				\frac{\partial^\alpha u(x,t)}{\partial^{GL}_{-} x^\alpha}&=\lim_{\Delta x\rightarrow0^{+}}\frac{1}{\Delta x^\alpha}\sum_{k=0}^{\lfloor(R-x)/\Delta x\rfloor}g_k^{(\alpha)}u(x+k\Delta x,t),
			\end{split}
		\end{align}
		where $\lfloor\cdot\rfloor$ is the floor function, $g_k^{(\alpha)}$ are the alternating fractional binomial coefficients
		\begin{align*}
			g_k^{(\alpha)}=(-1)^k
			  \binom{\alpha}{k}
			  =\frac{(-1)^k}{k!}\alpha(\alpha-1)\cdots(\alpha-k+1)\qquad k=1,2,\ldots
		\end{align*}
		and $g_0^{(\alpha)} = 1$. Formula \eqref{Grun} can be seen as an extension of the definition of ordinary derivatives via limit of the difference quotient.

Finally, another common definition of fractional derivative was proposed by Caputo:
\begin{equation}\label{eq:caputo}
\begin{split}
\frac{\partial^\alpha u(x,t)}{\partial^{C}_{+} x^\alpha}&= \frac{1}{\Gamma(n-\alpha)}\int_L^x\frac{\frac{\partial^n}{\partial \xi^n}u(\xi,t)}{(x-\xi)^{\alpha+1-n}}d\xi,\\
\frac{\partial^\alpha u(x,t)}{\partial^{C}_{-} x^\alpha}&=\frac{(-1)^n}{\Gamma(n-\alpha)}\int_x^R\frac{\frac{\partial^n}{\partial \xi^n}u(\xi,t)}{(\xi-x)^{\alpha+1-n}}d\xi.
\end{split} \end{equation}

Note that \eqref{eq:caputo} requires the $n$th derivative of $u(x,t)$ to be absolutely integrable. Higher regularity of the solution is typically imposed in time rather than in space; as a consequence, the Caputo formulation is mainly used for fractional derivatives in time, while Riemann--Liouville's is preferred for fractional derivatives in space. The use of Caputo's derivative provides some advantages in the treatment of boundary conditions when applying the Laplace transform method (see \cite[Chapter 2.8]{frac}).

	The various definitions are equivalent only if $u(x,t)$ is sufficiently regular and/or vanishes with all its derivatives on the boundary. In detail, it holds that:
	
	\begin{itemize}
		\item if the $n$th space derivative of $u(x,t)$ is continuous on $[L,R]$, then
		$$\frac{\partial^\alpha u(x,t)}{\partial^{RL}_{+} x^\alpha}=\frac{\partial^\alpha u(x,t)}{\partial^{GL}_{+} x^\alpha}, \qquad \frac{\partial^\alpha u(x,t)}{\partial^{RL}_{-} x^\alpha}=\frac{\partial^\alpha u(x,t)}{\partial^{GL}_{-} x^\alpha}.$$
		\item if $\frac{\partial^\ell}{\partial x^\ell}u(L,t)=\frac{\partial^\ell}{\partial x^\ell}u(R,t)=0$ for all $\ell=0,1,\dots,n-1$, then $$\frac{\partial^\alpha u(x,t)}{\partial^{RL}_{+} x^\alpha}=\frac{\partial^\alpha u(x,t)}{\partial^{C}_{+} x^\alpha}, \qquad \frac{\partial^\alpha u(x,t)}{\partial^{RL}_{-} x^\alpha}=\frac{\partial^\alpha u(x,t)}{\partial^{C}_{-} x^\alpha};$$
	\end{itemize}		
	In this work we are concerned with space fractional
	derivatives so we focus on the Riemann--Liouville and the Gr\"unwald--Letnikov formulations. However, the analysis
	of the structure in the discretizations can be generalized
	with minor adjustments to the Caputo case.
	
		\subsection{Discretizations of fractional derivatives}
		
		We consider two different discretization schemes for
		the FDE problem \eqref{fde_FD}: finite differences and
		finite elements.
		The
		first scheme relies on the Gr\"unwald--Letnikov formulation while the second is derived adopting the Riemann--Liouville
		 definition.
		
		\subsubsection{Finite difference scheme using Gr\"unwald-Letnikov formulas}\label{sec:grunw}
		
		As suggested in \cite{Meer1}, in order to obtain a consistent and unconditionally stable finite difference scheme for \eqref{fde_FD}, we use a shifted version of the Gr\"unwald--Letnikov fractional derivatives obtained replacing $k\Delta x$ with $(k-1)\Delta x$ in \eqref{Grun}.

	Let us fix two positive integers $N,M$, and define the following partition of $[L,R]\times[0,T]$:
	\begin{align}\label{part}
		x_i=L+i\Delta x, \quad \Delta x=\frac{(R-L)}{N+1},  \quad i=0,\ldots,N+1,\\
		\notag t_m=m\Delta t, \quad \Delta t=\frac{T}{M+1}, \quad m=0,\ldots,M+1.&
	\end{align}
	The idea in \cite{Meer1} is to combine a discretization in time of equation \eqref{fde_FD} by an implicit Euler method with a first order discretization in space of the fractional derivatives by a shifted Gr\"unwald-Letnikov estimate, i.e.,
	\begin{equation*}
		\frac{u(x_i,t_m)-u(x_i,t_{m-1})}{\Delta t}=d_{+,i}^{(m)}\frac{\partial^\alpha u(x_i,t_m)}{\partial^{GL}_{+} x^\alpha}+d_{-,i}^{(m)}\frac{\partial^\alpha u(x_i,t_m)}{\partial^{GL}_{-} x^\alpha}+f_i^{(m)}+\mathcal O(\Delta t),
	\end{equation*}
	where $d_{\pm,i}^{(m)}=d_{\pm}(x_i,t_m)$, $f_i^{(m)}:=f(x_i,t_m)$ and
	\begin{align*}
		\frac{\partial^\alpha u(x_i,t_m)}{\partial^{GL}_{+} x^\alpha}&=\frac{1}{\Delta x^\alpha}\sum_{k=0}^{i+1}g_k^{(\alpha)}u(x_{i-k+1},t_m)+\mathcal O(\Delta x),\\
		\frac{\partial^\alpha u(x_i,t_m)}{\partial^{GL}_{-} x^\alpha}&=\frac{1}{\Delta x^\alpha}\sum_{k=0}^{N-i+2}g_k^{(\alpha)}u(x_{i+k-1},t_m)+\mathcal O(\Delta x).
	\end{align*}
	The resulting finite difference approximation scheme is then
	\begin{equation*}
		\frac{u_i^{(m)}-u_i^{(m-1)}}{\Delta t}=\frac{d_{+,i}^{(m)}}{\Delta x^\alpha}\sum_{k=0}^{i+1}g_k^{(\alpha)}u_{i-k+1}^{(m)}+\frac{d_{-,i}^{(m)}}{\Delta x^\alpha}\sum_{k=0}^{N-i+2}g_k^{(\alpha)}u_{i+k-1}^{(m)}+f_i^{(m)},
	\end{equation*}
	where by $u_i^{(m)}$ we denote a numerical approximation of $u(x_i,t_m)$. The previous approximation scheme can be written in matrix form as (see \cite{WWT})
	\begin{equation} \label{system_FD}
	\fdm u^{(m)} =
	\left( I+ \frac{\Delta t}{\Delta x^\alpha}(D_{+}^{(m)}T_{\alpha,N}+D_{-}^{(m)}T_{\alpha,N}^T)\right)u^{(m)}= u^{(m-1)}+\Delta t f^{(m)},
	\end{equation}
	 $u^{(m)}=[u_1^{(m)},\ldots,u_N^{(m)}]^T$, $f^{(m)}=[f_1^{(m)},\ldots,f_N^{(m)}]^T$, $D_{\pm}^{(m)}={\rm diag}(d_{\pm,1}^{(m)},\ldots,d_{\pm,N}^{(m)})$,  $I$ is the identity matrix of order $N$ and
	\begin{equation} \label{toeplitz_FD}
		T_{\alpha,N}=-\left[
		\begin{matrix}
			g_1^{(\alpha)}& g_0^{(\alpha)} & 0 & \cdots & 0 &0\\
			g_2^{(\alpha)}& g_1^{(\alpha)} & g_0^{(\alpha)}& 0 &\cdots & 0\\
			\vdots &\ddots & \ddots & \ddots &\ddots & \vdots\\
			\vdots &\ddots & \ddots & \ddots &\ddots & 0\\
			g_{N-1}^{(\alpha)}&\ddots & \ddots & \ddots & g_1^{(\alpha)}& g_0^{(\alpha)}\\
			g_{N}^{(\alpha)}& g_{N-1}^{(\alpha)} & \cdots & \cdots & g_2^{(\alpha)}& g_1^{(\alpha)}
		\end{matrix}
		\right]_{N\times N}
	\end{equation}
	is a lower Hessenberg Toeplitz matrix.
		Note that $\fdm$ has a Toeplitz-like structure, in the sense that it can be expressed as a sum of products between diagonal and dense Toeplitz matrices. It can be shown that $\fdm$ is strictly diagonally dominant and then non singular (see \cite{Meer1,WWT}), for every choice of the parameters $m\ge 0$, $N\ge 1$, $\alpha \in (1,2)$. Moreover, it holds $g_1^{(\alpha)}=-\alpha$,  $g_0^{(\alpha)}>g_2^{(\alpha)}>g_3^{(\alpha)}>\dots>0$ and  $g_k^{(\alpha)}=\mathcal O(k^{-\alpha-1})$.
	
\subsubsection{Finite element space discretization} \label{sub:FDE2}
  We consider a finite element discretization for \eqref{fde_FD}, using the Riemann--Liouville formulation \eqref{eq:frac-der}. Let $\mathcal B = \{ \varphi_1, \ldots, \varphi_N \}$ be a finite element basis, consisting of positive functions with compact support that vanish on the boundary. At each time step $t$, we replace the true solution $u$ by its finite element approximation $u_{\Delta x}$
	\begin{equation}\label{vfm}
	u_{\Delta x}=\sum_{j=1}^N u_j(t)\varphi_j(x),
	\end{equation}
	then we formulate a finite element scheme for \eqref{fde_FD}. Assuming that the diffusion coefficients do not depend on $t$, by means of this formulation we can find the steady-state solution solving the linear system $\fems u = f$, with
	\[
	(\fems)_{ij} = \langle \varphi_i(x),
	d_{+}(x)\frac{\partial^\alpha \varphi_j(x)}{\partial^{RL}_{+} x^\alpha}+d_{-}(x)\frac{\partial^\alpha \varphi_j(x)}{\partial^{RL}_{-} x^\alpha}\rangle, \qquad
	f_i = \langle -v(x), \varphi_i(x) \rangle.
	\]
By linearity, we can decompose $\fems = \femsp + \femsm$ where $\femsp$ includes the action of the left-handed derivative, and $\femsm$ the effect of the right-handed one.

The original time-dependent equation \eqref{fde_FD} can be solved by means of a suitable time discretization (such as the implicit Euler method used in the previous section) combined with the finite element scheme introduced here (see, e.g., \cite{duan2015finite,liu2017discontinuous}). In the time-dependent case,
the matrix associated with the instant $t_m$ will be denoted by $\fem$, and it has the same structure of $\fems$ up to a shift by the mass matrix $M$:
\[
\fem =   M-\Delta t \fems,\qquad (M)_{ij}=\langle\varphi_i(x),\varphi_j(x)\rangle.
\]
 The resulting time stepping scheme can be expressed as follows
 \[
 \fem u^{(m)} = Mu^{(m-1)}+\Delta t f^{(m)}.
 \]

We refer the reader to \cite{ervin,roop2006computational} for more details on the finite element discretization of fractional problems, including a detailed analysis of the spaces used for the basis functions and convergence properties.

	\section{Rank structure in the 1D case}\label{sec:rank}
	The aim of this section is to prove that different formulations of 1D fractional derivatives generate
	discretizations with similar properties. 	
	In particular, we are interested in showing that off-diagonal blocks
	in the matrix discretization of such operators have a low numerical rank. When these ``off-diagonal ranks'' are exact (and not
	just numerical ranks) this structure is sometimes called
	\emph{quasiseparability}, or \emph{semiseparability} \cite{eidelman:book1,vanbarel:book1}, see also Figure \ref{MML}. Here we recall
	the definition and some basic properties.
	\begin{definition} A matrix $A\in \mathbb C^{N\times N}$ is
		\emph{quasiseparable} of order $k$ (or quasiseparable of rank $k$) if
		the maximum of the ranks of all its submatrices contained in the
		strictly upper or lower part is exactly $k$.
	\end{definition}
	\begin{figure}[!ht] \centering
		\begin{tikzpicture}
		\draw [thick] (0,0) rectangle (2,2); \draw (0,2) -- (2,0);
		
		% Low rank blocks
		\fill [lowrankcolor] (0.05,1.25) rectangle
		(0.65,.05); \fill [lowrankcolor] (0.8,1.4) rectangle (1.75,1.75);
		
		% ... and their representation as outer products
		\fill [lowrankcolor] (-1.8,1.25) rectangle (-1.65,.05); \fill [lowrankcolor]
		(-1.6,1.25) rectangle (-1.1,1.1); \draw[->] (-1.55,0.6) -- (-.05,0.6);
		
		\fill [lowrankcolor] (2.5,1.75) rectangle (2.65,1.4); \fill
		[lowrankcolor] (2.7,1.75) rectangle (3.2, 1.6); \draw [<-] (1.8,1.575) -- (2.45,1.575);
		\end{tikzpicture}
		\caption{Pictorial description of the quasiseparable structure; the
			off-diagonal blocks can be represented as low-rank outer products.}\label{MML}
	\end{figure}
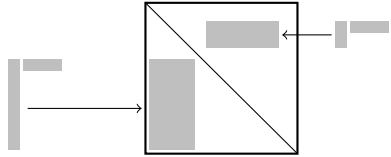
	\begin{lemma} Let $A,B\in\mathbb C^{N\times N}$ be quasiseparable of
		rank $k_A$ and $k_B$, respectively.
		\begin{enumerate}
			\item The quasiseparable rank of both $A+B$ and $A\cdot B$ is at
			most $k_A+k_B$.
			\item If $A$ is invertible then $A^{-1}$ has quasiseparable rank
			$k_A$.
		\end{enumerate}
	\end{lemma}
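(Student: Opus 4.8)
The plan is to treat the three operations in turn, after reducing the quasiseparable rank to a finite family of corner blocks. Any submatrix contained in the strictly lower part of a matrix $A$, say $A(I,J)$ with $\min(I)>\max(J)=:s$, is a submatrix of the lower corner block $A(\{s+1,\dots,N\},\{1,\dots,s\})$ and hence has rank at most that of the corner block; conversely the corner blocks are themselves strictly lower submatrices. Thus the maximum rank over all strictly lower submatrices equals the maximum over the $N-1$ corner blocks $A(\{i+1,\dots,N\},\{1,\dots,i\})$, and symmetrically in the strictly upper part. The sum is then immediate: every strictly lower (or upper) submatrix of $A+B$ splits as $(A+B)(I,J)=A(I,J)+B(I,J)$, so subadditivity of the rank gives $\rank((A+B)(I,J))\le \rank(A(I,J))+\rank(B(I,J))\le k_A+k_B$, and taking the maximum over off-diagonal submatrices yields the bound.

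For the product $C=AB$ I would exploit a splitting of the summation index. Fix a strictly lower submatrix $C(I,J)$ and set $s=\max(J)<\min(I)$; then
\[
C(I,J)=A(I,\{1,\dots,s\})\,B(\{1,\dots,s\},J)+A(I,\{s+1,\dots,N\})\,B(\{s+1,\dots,N\},J).
\]
The factor $A(I,\{1,\dots,s\})$ lies in the strictly lower part of $A$ (its rows exceed $s$), so the first product has rank at most $k_A$; the factor $B(\{s+1,\dots,N\},J)$ lies in the strictly lower part of $B$ (its columns are at most $s$), so the second product has rank at most $k_B$. Hence $\rank(C(I,J))\le k_A+k_B$. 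The mirror-image splitting at $s=\max(I)$ for a strictly upper block $C(I,J)$ with $\max(I)<\min(J)$ gives the same bound, and maximizing over blocks completes the second half of part~1.

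For the inverse the decisive tool is the nullity theorem (see, e.g., \cite{vanbarel:book1,eidelman:book1}): if $A$ is nonsingular then, for any index sets $\alpha,\beta$, one has $\mathrm{nullity}(A^{-1}(\alpha,\beta))=\mathrm{nullity}(A(\beta^c,\alpha^c))$, where the superscript $c$ denotes complementation in $\{1,\dots,N\}$ and nullity is the column defect (number of columns minus rank). I would apply this to the lower corner blocks with $\alpha=\{i+1,\dots,N\}$ and $\beta=\{1,\dots,i\}$, so that $\beta^c=\{i+1,\dots,N\}$ and $\alpha^c=\{1,\dots,i\}$. Crucially, the anti-diagonal pairing makes $A^{-1}(\alpha,\beta)$ and $A(\beta^c,\alpha^c)$ both $(N-i)\times i$ matrices sharing the same column count $i$, so equal nullities force equal ranks: $\rank(A^{-1}(\{i+1,\dots,N\},\{1,\dots,i\}))=\rank(A(\{i+1,\dots,N\},\{1,\dots,i\}))$. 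The same identity holds for the upper corner blocks, and maximizing over $i$ shows that $A^{-1}$ has exactly quasiseparable rank $k_A$.

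The sum and product are routine; the genuine content — and the step I expect to be the main obstacle — is the inverse, which rests entirely on the nullity theorem. I would either cite it or supply a short self-contained proof via Schur-complement/cofactor identities, taking care to emphasize that it is precisely the complementary \emph{anti-diagonal} choice of index sets that gives matching column counts, so that the two blocks share their rank and not merely their nullity.
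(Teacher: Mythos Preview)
Your proof is correct. Note, however, that the paper does not actually supply a proof of this lemma: it is stated as a known structural fact about quasiseparable matrices, with the implicit reference to the monographs \cite{eidelman:book1,vanbarel:book1} cited immediately before the definition. Your arguments---subadditivity for the sum, the index-splitting at $s=\max(J)$ for the product, and the nullity theorem applied to complementary corner blocks for the inverse---are precisely the standard ones found in that literature, so there is nothing to compare against beyond observing that you have filled in what the paper leaves as background.
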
 	
	In order to perform the analysis of the off-diagonal blocks
	we need to formalize the concept of numerical rank. In the rest of the paper, $\norm{\cdot} $ will indicate the Euclidean norm.
	
	\begin{definition}
		We say that a matrix $X$ has $\epsilon$-rank $k$, and we write
		$\mathrm{rank}_\epsilon(X) = k$, if there exists $\delta X$ such that $\norm{\delta X} \leq \epsilon \norm{X}$, $\mathrm{rank}(X + \delta X) = k$ and the rank
		of $X + \delta X'$
		is at least $k$ for any other $\norm{\delta X'} \leq \epsilon \norm{X}$.
		More formally:
		\[
		\mathrm{rank}_\epsilon(X) = \min_{\norm{\delta X} \leq \epsilon \norm{X}} \mathrm{rank}(X + \delta X).
		\]
	\end{definition}
	
	Often we are interested in measuring approximate quasiseparability.
	We can give a similar ``approximate'' definition.
	
	\begin{definition}
		We say that a matrix $X$ has $\epsilon$-qsrank $k$ if for any
		off-diagonal block $Y$ of $X$ there exists a perturbation $\delta Y$ such that
		$\norm{\delta Y} \leq \epsilon \norm{X}$ and $Y + \delta Y$ has
		rank (at most) $k$. More formally:
		\[
		\mathrm{qsrank}_\epsilon(X) = \max_{Y \in \mathrm{Off}(X)} \min_{\norm{\delta Y} \leq \epsilon \norm{X}} \mathrm{rank}(Y + \delta Y),
		\]
		where $\mathrm{Off}(X)$ is the set of the off-diagonal blocks of $X$.
	\end{definition}
	
	\begin{remark}
		As shown in \cite{massei2017solving},  $\mathrm{qsrank}_\epsilon(X)=k$ implies the existence of a ``global" perturbation $\delta X$ such that $X+\delta X$ is quasiseparable of rank $k$ and $\norm{\delta X}\leq \epsilon\sqrt N$. 		
		In addition, the presence of $\norm{X}$ in the above definition makes
		the $\epsilon$-qsrank invariant under rescaling, i.e.,
		$\mathrm{qsrank}_\epsilon(A) = \mathrm{qsrank}_{\epsilon}(\theta A)$
		for any $\theta \in \mathbb C\setminus \{0\}$.
	\end{remark}
	The purpose of the following subsections is to show that the various discretizations of fractional derivatives provide matrices with small $\epsilon$-qsrank. The $\epsilon$-qsrank turns out to grow asymptotically as $\mathcal O(\log(\epsilon^{-1})\log(N))$, see Table~\ref{tab:bounds} which summarizes our findings.
	
	\begin{table}
		\centering
		\begin{tabular}{c|cc}
			Discretization & $\epsilon$-qsrank & Reference\\
			\hline \\[-5pt]
			Finite differences& $2 + 2\left\lceil
			\frac {2}{\pi^2} \log \left( \frac{4}{\pi}N \right)
			\log\left(\frac{32}{\epsilon}\right)
			\right\rceil$ & Lem.~\ref{lem:fd-qsrank}, Cor.~\ref{cor:fd-qsrank} \\[5pt]
			Finite elements & $k +
			2\left\lceil\log_2 \left(\frac{R-L}{\delta} \right)\right\rceil \cdot
			\left(1 + \left\lceil \log_2 \left(\frac{(\alpha+1) \cdot 4^{\alpha+1}}{\epsilon}\right) \right\rceil \right)$ & Thm.~\ref{thm:structure-riesz-fem}\\[5pt]
		\end{tabular}
		\caption{Bounds for the $\epsilon$-qsrank of different discretizations. For finite elements methods with equispaced
		basis functions the parameter $(R-L)/\delta \approx N$,
	    and $k$ is the number of overlapping basis functions (see
	    Definition~\ref{def:overlap}). }
		\label{tab:bounds}
	\end{table}

\subsection{Qualitative analysis of the quasiseparable structure through GLT theory}\label{sec:glt}
In the 1D setting the finite difference discretization matrices $\fdm$ present a diagonal-times-Toeplitz structure --- see \eqref{system_FD} --- where the diagonal matrices are the discrete counterpart of the diffusion coefficients and the Toeplitz components come from the fractional derivatives. This structure falls in the Generalized Locally Toeplitz (GLT) class, an algebra of matrix-sequences obtained as a closure under some algebraic operations (linear combination, product, inversion, conjugation) of Toeplitz, diagonal and low-rank plus small-norm matrix-sequences.

In the remaining part of this section we show that the off-diagonal blocks of $\fdm$ can be decomposed as the sum of a low-rank plus a small-norm term. Such a result is obtained exploiting the properties of  some simple GLT sequences, i.e., Toeplitz and Hankel sequences associated with a function $f\in L^1$.

\begin{definition}
	Let $f\in L^1([-\pi,\pi])$ and let $\left\{f_j\right\}_{j\in\mathbb{Z}}$ be its Fourier coefficients. Then the sequence of $N\times N$ matrices $\left\{T_N\right\}_{N\in\mathbb{N}}$ with $T_N=[f_{i-j}]_{i,j=1}^{N}$ (resp. $\{ H_N \}_N$ with $H_N = [f_{i+j-2}]_{i,j=1}^N$) is called the \emph{sequence of Toeplitz (resp. Hankel) matrices generated by $f$}.
\end{definition}
\noindent
The generating function $f$ provides a description of the spectrum of $T_{N}$, for $N$ large enough in the sense of the following definition.

\newcommand{\p}{_{N\in\mathbb{N}}}

\begin{definition}\label{def-distribution}
	Let $f:[a,b]\to\mathbb{C}$ be a measurable function and let $\{A_N\}\p$ be a sequence of matrices of size $N$ with singular values $\sigma_j(A_N)$, $j=1,\ldots,N$. We say that $\{A_N\}\p$ is {\em distributed as $f$ over $[a,b]$ in the sense of the singular values,} and we write $\{A_N\}\p\sim_\sigma(f,[a,b]),$ if
	\begin{align}\label{distribution:sv-eig-bis}
		\lim_{N\to\infty}\frac{1}{N}\sum_{j=1}^{N}F(\sigma_j(A_N))=
		\frac1{b - a} \int_a^b F(|f(t)|) dt,
	\end{align}
	for every continuous function $F$ with compact support. In this case, we say that $f$ is the \emph{symbol} of $\{A_{N}\}_{N}$.
\end{definition}
\noindent
In the special case $f \equiv 0$, we say that $\{A_N\}_{N\in \mathbb{N}}$ is a zero distributed sequence. The above relation tells us that in presence of a zero distributed sequence 
the singular values of the $N$th matrix (weakly) cluster around $0$. This can be formalized
by the following result \cite{GS}.

\begin{proposition}
	Let $\{ A_N \}_N$ be a matrix sequence. Then
	$\{ A_N \}_N \sim_{\sigma} 0$ if and only if there
	exist
	two matrix sequences $\{ R_N \}_N$ and $\{ E_N\}_N$ such that
	$A_N = R_N + E_N$, and
	\[
	\lim_{N \to \infty} \frac{\mathrm{rank}(R_N)}{N} = 0, \qquad
	\lim_{N \to \infty} \norm{E_N} = 0.
	\]
\end{proposition}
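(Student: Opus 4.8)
The plan is to prove both implications through elementary singular-value perturbation theory, passing through a reformulation in terms of the counting function $N_\tau(A_N) := \#\{j : \sigma_j(A_N) > \tau\}$. The two directions are asymmetric in difficulty: the sufficiency of the decomposition is a routine splitting argument, whereas necessity hinges on one genuinely delicate point about escaping mass.

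First I would treat the implication ($\Leftarrow$). Assume $A_N = R_N + E_N$ with $r_N := \mathrm{rank}(R_N)$ satisfying $r_N / N \to 0$ and $\norm{E_N} \to 0$. By the Weyl inequality for singular values, $\sigma_{i+j-1}(R_N + E_N) \leq \sigma_i(R_N) + \sigma_j(E_N)$; taking $i = r_N + 1$, so that $\sigma_i(R_N) = 0$, yields $\sigma_{r_N + j}(A_N) \leq \norm{E_N}$ for every admissible $j$. Thus all but at most $r_N$ of the singular values of $A_N$ lie in $[0, \norm{E_N}]$. For a continuous $F$ with compact support I would split $\frac1N \sum_j F(\sigma_j(A_N))$ into the $r_N$ ``large'' terms, bounded in modulus by $\frac{r_N}{N}\sup|F| \to 0$, and the remaining $N - r_N$ terms; by uniform continuity of $F$ at $0$ together with $\norm{E_N} \to 0$, each of the latter is within $o(1)$ of $F(0)$, so their average tends to $F(0)$. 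This is exactly $\{A_N\}_N \sim_\sigma 0$.

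For the converse ($\Rightarrow$), the crucial first step is to show that $\{A_N\}_N \sim_\sigma 0$ forces $N_\tau(A_N)/N \to 0$ for every fixed $\tau > 0$, and this is where the presence of the value $F(0)$ on the right-hand side of the distributional relation becomes essential. I would test the relation against a fixed continuous, compactly supported $F$ with $0 \leq F \leq 1$, $F(0) = 1$, and $F \equiv 0$ on $[\tau, \infty)$. Then $\frac1N \sum_j F(\sigma_j(A_N)) \leq \frac1N \#\{j : \sigma_j(A_N) < \tau\}$, and since the left-hand side tends to $F(0) = 1$, we obtain $\frac1N \#\{j : \sigma_j(A_N) \geq \tau\} \to 0$. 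The hard part will be appreciating why this argument is necessary: the definition only controls test functions of \emph{compact} support, so a priori a positive fraction of the empirical singular-value mass could escape to $+\infty$ undetected; the peaked test function normalized at $0$ is precisely the device that rules this out and upgrades the vague convergence to the counting estimate.

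The final step is a diagonal construction yielding a single admissible threshold sequence. Since $N_{1/k}(A_N)/N \to 0$ as $N \to \infty$ for each $k$, I would choose a strictly increasing sequence $N_k$ with $N_{1/k}(A_N)/N \leq 1/k$ for all $N \geq N_k$, and set $\tau_N := 1/k$ for $N_k \leq N < N_{k+1}$, so that $\tau_N \to 0$ while still $N_{\tau_N}(A_N)/N \to 0$. Taking $R_N$ to be the truncated SVD of $A_N$ retaining the singular values exceeding $\tau_N$ and $E_N := A_N - R_N$ gives $\norm{E_N} \leq \tau_N \to 0$ and $\mathrm{rank}(R_N) = N_{\tau_N}(A_N)$, hence $\mathrm{rank}(R_N)/N \to 0$, which completes the proof.
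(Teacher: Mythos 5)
Your proof is correct. Note that the paper does not actually prove this proposition---it quotes it as a known result from the GLT literature \cite{GS}---and your argument is precisely the standard proof found there: the Weyl singular-value inequality to split off the low-rank part for sufficiency, and for necessity the peaked compactly supported test function giving the counting estimate $\#\{j:\sigma_j(A_N)\geq\tau\}/N\to 0$, followed by a diagonal choice of thresholds and a truncated SVD. Both directions are complete as written, including the genuinely delicate point you flag (compact support alone cannot prevent mass escaping to infinity; the normalization $F(0)=1$ is what forces the counting bound).
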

\noindent
For our off-diagonal analysis
we need to characterize the symbol of Hankel matrices \cite{Fasino}.

\begin{proposition} \label{lem:hankel-zero-symbol}
	If $\{ H_N \}_N$ is an Hankel sequence generated
	by $f \in L^1$, then
	$\{ H_N \}_N \sim_\sigma 0$.
\end{proposition}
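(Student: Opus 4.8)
The plan is to reduce everything to a single clean estimate on the trace (nuclear) norm of $H_N$ and then to combine it with Markov's inequality and Weyl's inequality for singular values. Throughout, let $\norm{\cdot}_*$ denote the nuclear norm, i.e.\ the sum of the singular values, and recall that $f_j = \frac{1}{2\pi}\int_{-\pi}^\pi f(\theta)e^{-ij\theta}\,d\theta$. The starting observation is that $H_N^{(g)} := [g_{i+j-2}]_{i,j=1}^N$ admits the integral representation
\[
H_N^{(g)} = \frac{1}{2\pi}\int_{-\pi}^\pi g(\theta)\, v(\theta)v(\theta)^T\,d\theta,
\qquad v(\theta) = \bigl(1, e^{-i\theta}, \dots, e^{-i(N-1)\theta}\bigr)^T,
\]
since $g_{i+j-2} = \frac{1}{2\pi}\int g(\theta)e^{-i(i-1)\theta}e^{-i(j-1)\theta}\,d\theta$. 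Each $v(\theta)v(\theta)^T$ is a rank-one matrix with $\norm{v(\theta)v(\theta)^T}_* = \norm{v(\theta)}^2 = N$, so the triangle inequality for the nuclear norm (in integral form) yields the \emph{key estimate}
\[
\norm{H_N^{(g)}}_* \le \frac{1}{2\pi}\int_{-\pi}^\pi |g(\theta)|\,\norm{v(\theta)v(\theta)^T}_*\,d\theta = \frac{N}{2\pi}\,\norm{g}_{L^1},
\]
valid for every $g\in L^1$.

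With this at hand, I would argue by the density of trigonometric polynomials in $L^1([-\pi,\pi])$. Fix $\epsilon>0$ and pick a trigonometric polynomial $p$ of degree $d=d(\epsilon)$ with $\norm{f-p}_{L^1}\le\epsilon$, and set $g=f-p$. Because $p_k=0$ for $|k|>d$ and the Hankel index $i+j-2$ is nonnegative, the matrix $H_N^{(p)}$ is supported in the top-left corner where $i+j\le d+2$; hence it is confined to the first $d+1$ rows and $\mathrm{rank}(H_N^{(p)})\le d+1$. Writing $H_N = H_N^{(p)} + H_N^{(g)}$ and invoking Weyl's inequality $\sigma_{r+j}(A+B)\le\sigma_{r+1}(A)+\sigma_j(B)$ with $A=H_N^{(p)}$, $r=\mathrm{rank}(H_N^{(p)})$, gives $\sigma_{r+j}(H_N)\le\sigma_j(H_N^{(g)})$. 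Consequently, for any threshold $\delta>0$,
\[
\#\{\, j : \sigma_j(H_N) > \delta \,\}
\le (d+1) + \#\{\, j : \sigma_j(H_N^{(g)}) > \delta \,\}
\le (d+1) + \frac{\norm{H_N^{(g)}}_*}{\delta}
\le (d+1) + \frac{N\epsilon}{2\pi\delta},
\]
where the middle step is Markov's inequality and the last is the key estimate.

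Dividing by $N$ and letting $N\to\infty$ (so that the fixed quantity $(d+1)/N\to0$) yields $\limsup_N \frac1N\#\{ j : \sigma_j(H_N) > \delta \} \le \epsilon/(2\pi\delta)$, and since $\epsilon>0$ is arbitrary the fraction of singular values exceeding any $\delta>0$ tends to $0$. This is exactly the weak clustering of the singular values at the origin, which is equivalent to $\{H_N\}_N\sim_\sigma 0$: for a continuous, compactly supported $F$ one splits $\frac1N\sum_j F(\sigma_j(H_N))$ into the terms with $\sigma_j\le\delta$ (controlled by the modulus of continuity of $F$) and those with $\sigma_j>\delta$ (an $o(N)$ fraction, controlled by $\norm{F}_\infty$), obtaining convergence to $F(0)$, which is precisely the right-hand side of the distribution formula when $f\equiv0$. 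I expect the main obstacle to be conceptual rather than computational: because $\norm{H_N^{(g)}}$ need \emph{not} be small even when $\norm{g}_{L^1}$ is small (a Dirichlet-kernel example shows the spectral norm can be of order $N$), the preceding rank-plus-small-norm characterization cannot be applied with a single splitting of $f$. The trace-norm bound is what resolves this, since it controls the \emph{bulk} of the singular values on average and lets the bounded-rank polynomial part be peeled off via Weyl's inequality; an alternative, equivalent route would be to phrase the same estimates in the language of approximating classes of sequences.
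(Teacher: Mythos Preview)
Your argument is correct. Note, however, that the paper does not give its own proof of this proposition: it is stated with a citation to \cite{Fasino} and used as a known fact, so there is no in-paper proof to compare against.

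Your route---the nuclear-norm estimate $\norm{H_N^{(g)}}_*\le \frac{N}{2\pi}\norm{g}_{L^1}$ from the rank-one integral representation, combined with a trigonometric-polynomial approximant contributing bounded rank, and then Weyl plus Markov---is a standard and clean way to obtain the result. Your closing observation is also well placed: the low-rank-plus-small-norm characterization that the paper records just after this proposition cannot be read off from a single splitting $f=p+g$, because $\norm{H_N^{(g)}}$ in spectral norm is not controlled by $\norm{g}_{L^1}$. The nuclear-norm bound is exactly what salvages the argument at the level of singular-value counts. If one wants to recover the paper's $R_N+E_N$ decomposition explicitly, it follows from your estimates by truncating the SVD of $H_N^{(g)}$ at a threshold depending on $\epsilon$ and then running a diagonal argument in $\epsilon$.
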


\begin{theorem}
	Let $\{ T_N \}_N$ be a sequence of Toeplitz matrices generated
	by $f \in L^1$. Then, for every off-diagonal block sequence
	$\{ Y_N \}_N$ of $\{ T_N \}_N$ with $Y_N\in\mathbb R^{\hat N\times \hat M}$, $\hat N,\hat M<N$
	there exist two sequences $\{ \hat R_N \}_N$ and $\{ \hat E_N \}_N$
	such that $Y_N = \hat R_N + \hat E_N$ and
	\[
	\lim_{N \to \infty} \frac{\mathrm{rank}(\hat R_N)}{N} = 0, \qquad
	\lim_{N \to \infty} \norm{\hat E_N} = 0.
	\]
	%These two sequences do not depend on the particular choice
	%of the off-diagonal block.
\end{theorem}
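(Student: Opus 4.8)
The plan is to reduce the study of an arbitrary off-diagonal block of $T_N$ to that of a Hankel sequence generated by an $L^1$ symbol, so that Proposition~\ref{lem:hankel-zero-symbol} together with the Proposition characterizing zero-distributed sequences (stated above) can be applied almost verbatim. The starting observation is that every contiguous off-diagonal block of a Toeplitz matrix is itself a rectangular Toeplitz matrix, and that a reversal of the columns turns a Toeplitz matrix into a Hankel one.

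First I would reduce to the case of a block lying in the strictly lower triangular part. Indeed, if $Y_N$ is an upper off-diagonal block, then $Y_N^T$ is a lower off-diagonal block of $T_N^T$, which is the Toeplitz matrix generated by $f^-(\theta):=f(-\theta)\in L^1$; since transposition preserves singular values, any decomposition obtained for $Y_N^T$ transfers to $Y_N$ with the same rank and norm bounds. So assume $Y_N=T_N(I,J)$ with $I=\{a+1,\dots,a+\hat N\}$, $J=\{b+1,\dots,b+\hat M\}$ and $a\ge b+\hat M$, so that its entries are $(Y_N)_{pq}=f_{(a-b)+(p-q)}$. Multiplying on the right by the $\hat M\times\hat M$ reversal (anti-identity) matrix $\Pi$ gives $(Y_N\Pi)_{pq}=f_{s+p+q}$ with $s:=a-b-\hat M-1\ge -1$. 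Hence $Y_N\Pi$ is Hankel, and since $s+p+q$ ranges over positive integers bounded by $a-b+\hat N-1<N$, the matrix $Y_N\Pi$ is exactly a contiguous submatrix of the square Hankel matrix $H_{N+1}$ generated by $f$ (take rows $\{s+3,\dots,\hat N+s+2\}$ and columns $\{1,\dots,\hat M\}$, which lie within $\{1,\dots,N+1\}$ because $a+\hat N\le N$).

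Then I would invoke Proposition~\ref{lem:hankel-zero-symbol} to obtain $\{H_{N+1}\}_N\sim_\sigma 0$ and apply the characterization above to write $H_{N+1}=R_{N+1}+E_{N+1}$ with $\mathrm{rank}(R_{N+1})/(N+1)\to 0$ and $\norm{E_{N+1}}\to 0$. Restricting this identity to the row/column index sets that cut out $Y_N\Pi$ yields $Y_N\Pi=\hat R+\hat E$, where $\hat R$ and $\hat E$ are the corresponding submatrices of $R_{N+1}$ and $E_{N+1}$. A submatrix is a two-sided compression $S_I(\cdot)S_J^T$ by matrices with orthonormal rows, hence $\mathrm{rank}(\hat R)\le \mathrm{rank}(R_{N+1})$ and, because singular values do not increase under compression, $\norm{\hat E}\le\norm{E_{N+1}}$. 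Undoing the reversal, $Y_N=\hat R\,\Pi+\hat E\,\Pi=:\hat R_N+\hat E_N$; since $\Pi$ is orthogonal, $\mathrm{rank}(\hat R_N)=\mathrm{rank}(\hat R)$ and $\norm{\hat E_N}=\norm{\hat E}$. Using $N+1\sim N$, both required limits follow at once.

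The main obstacle, and the step deserving care, is precisely this matching of indices: one must check that the reversed block is generated by \emph{nonnegative} Fourier indices of $f$ (respectively of $f^-$, after the transposition reduction), so that it genuinely embeds into a Hankel matrix generated by an $L^1$ symbol, and that the embedding size stays linear in $N$ (here $N+1$). Only then does the $1/N$ normalization in the definition of zero-distribution survive the passage to the submatrix, giving $\mathrm{rank}(\hat R_N)/N\to 0$. All the analytic content is carried by the Hankel zero-distribution result; the contribution of this proof is the purely linear-algebraic reduction, which is robust because reversal is orthogonal and compression is non-expansive on singular values.
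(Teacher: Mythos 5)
Your proof is correct and follows essentially the same route as the paper's: reverse the columns of a lower off-diagonal block to exhibit it as a submatrix of the Hankel matrix generated by $f$, invoke Proposition~\ref{lem:hankel-zero-symbol} together with the splitting characterization of zero-distributed sequences, and transfer the rank and norm bounds through the orthogonal reversal and the submatrix compression. The only cosmetic differences are that you treat the upper-triangular case explicitly by transposing to the Toeplitz matrix generated by $f(-\theta)$ (where the paper simply asserts ``without loss of generality'') and that you embed into $H_{N+1}$ rather than $H_N$, neither of which changes the substance of the argument.
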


\begin{proof}
	Consider the following partitioning of $T_N$
	\begin{equation*}
		T_{N}=
		\begin{bmatrix}
			T_{11} & T_{12}\\
			T_{21} & T_{22}
		\end{bmatrix},
	\end{equation*}
	where $T_{11}$ and $T_{22}$ are square. Without loss
	of generality, we assume that the off-diagonal block $Y_N$ is
	contained in $T_{21}$. Denote by $\{ H_N \}_N$ the Hankel sequence
	generated by $f$, the same
	function generating $\{ T_N \}_N$, and let $J$ be the counter-identity, with ones
	on the anti-diagonal and zero elsewhere.
	Then, $T_{21} J$ is a submatrix of $H_N$. Notice that $H_N$ does
	not depend on the specific choice of partitioning. In view
	of Proposition~\ref{lem:hankel-zero-symbol} we can
	write $H_N = R_N + E_N$, and therefore
	$T_{21}$ is a submatrix of $R_N J + E_N J$.
	We denote by $\hat R_{N}$ and $\hat E_{N}$ these two submatrices;
	since $\mathrm{rank}(\hat R_{N}) \leq
	\mathrm{rank}(R_{N})$ and $\norm{\hat E_{N}} \leq \norm{E_N}$, we have
	\[
	\lim_{N \to \infty} \frac{\mathrm{rank}(\hat R_{N})}{N} = 0, \qquad
	\lim_{N \to \infty} \norm{\hat E_{N}} = 0.
	\]
	$Y_N$ is a subblock of either $T_{21}$ or $T_{12}$,
	so the claim follows.
\end{proof}

The above result has an immediate consequence concerning the $\epsilon$-qsrank of a sequence of Toeplitz
matrices $\{ T_N \}_N$, and of $\{ T_N + Z_N \}_N$, where
$Z_N$ is any zero distributed matrix sequence.

\begin{corollary} \label{cor:quasisep-glt}
	Let $\{ T_N + Z_N \}_N$ be a sequence of matrices with $T_N$ Toeplitz generated
	by $f \in L^1$, and $Z_N$ zero distributed.
	Then, there exists a sequence
	of positive numbers
	$\epsilon_N \to 0$, such that
	\[
	\lim_{N \to \infty} \frac{\mathrm{qsrank}_{\epsilon_N}(T_N + Z_N)}{N} = 0.
	\]
\end{corollary}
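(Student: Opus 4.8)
The plan is to combine the Theorem just proved (which controls the off-diagonal blocks of the Toeplitz part $T_N$) with the Proposition characterizing zero-distributed sequences (applied to $Z_N$), and then to translate the resulting ``low-rank plus small-norm'' splitting of every off-diagonal block into a statement about the relative quantity $\mathrm{qsrank}_{\epsilon_N}$. The central issue, and the one that must be handled with care, is \emph{uniformity}: the definition of $\mathrm{qsrank}_\epsilon$ takes a maximum over \emph{all} off-diagonal blocks, whereas the Theorem is phrased for a single off-diagonal block sequence.

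First I would observe that the splitting produced in the proof of the Theorem is in fact uniform over the choice of partition point. Indeed, the decomposition $H_N = R_N + E_N$ of the Hankel matrix generated by $f$ does not depend on the partitioning, and every lower off-diagonal block of $T_N$ is, up to multiplication by the counter-identity $J$, a submatrix of $H_N$. Treating the upper off-diagonal blocks the same way --- using the Hankel matrix generated by $t \mapsto f(-t) \in L^1$, whose Fourier coefficients are $\{f_{-k}\}$ --- I obtain a single pair $R_N^{T}, E_N^{T}$ (gathering the two Hankel low-rank parts) such that every off-diagonal block $Y$ of $T_N$ admits $Y = \hat R + \hat E$ with $\mathrm{rank}(\hat R) \le \mathrm{rank}(R_N^{T})$ and $\norm{\hat E} \le \norm{E_N^{T}}$, where $\mathrm{rank}(R_N^{T})/N \to 0$ and $\norm{E_N^{T}} \to 0$.

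Next, since $Z_N \sim_\sigma 0$, the Proposition gives a \emph{global} decomposition $Z_N = R_N^{Z} + E_N^{Z}$ with $\mathrm{rank}(R_N^{Z})/N \to 0$ and $\norm{E_N^{Z}} \to 0$; this restricts to every off-diagonal block at no extra cost. Adding the two contributions, every off-diagonal block of $X_N := T_N + Z_N$ splits as a low-rank term of rank at most $r_N := \mathrm{rank}(R_N^{T}) + \mathrm{rank}(R_N^{Z}) = o(N)$ plus a term of norm at most $\eta_N := \norm{E_N^{T}} + \norm{E_N^{Z}} \to 0$, and both bounds are uniform over the off-diagonal blocks.

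Finally I would convert this into the claim. Choosing, for each off-diagonal block $Y$, the perturbation $\delta Y$ equal to minus its small-norm part gives $\mathrm{rank}(Y + \delta Y) \le r_N$ and $\norm{\delta Y} \le \eta_N$; hence $\mathrm{qsrank}_\epsilon(X_N) \le r_N$ as soon as $\epsilon \norm{X_N} \ge \eta_N$. I would therefore set $\epsilon_N := \eta_N / \norm{X_N}$, which yields $\mathrm{qsrank}_{\epsilon_N}(X_N)/N \le r_N/N \to 0$. It remains to verify $\epsilon_N \to 0$, and this is exactly where the norm in the denominator matters: because $T_N + Z_N \sim_\sigma f$ (the zero-distributed $Z_N$ does not perturb the symbol), whenever $f$ is not a.e.\ zero a fixed positive fraction of the singular values of $X_N$ stays bounded away from $0$, so $\norm{X_N}$ is bounded below by a positive constant $c$ and $\epsilon_N \le \eta_N/c \to 0$. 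The main obstacle is precisely this normalization: the Theorem and the Proposition control absolute perturbations, while $\mathrm{qsrank}_\epsilon$ is defined relative to $\norm{X_N}$, so the argument genuinely needs the lower bound on $\norm{X_N}$, equivalently the nontriviality of the symbol $f$; in the degenerate case $f \equiv 0$ one has $T_N = 0$ and the statement should be read directly on $Z_N$.
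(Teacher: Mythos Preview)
The paper supplies no proof of this corollary; it is stated as an ``immediate consequence'' of the preceding Theorem and left at that. Your proposal therefore furnishes what the paper omits, and does so correctly in the substantive case. In particular you identify and handle two points that the word ``immediate'' glosses over: the uniformity of the low-rank/small-norm splitting over \emph{all} off-diagonal blocks (which you rightly extract from the fact that the Hankel decomposition $H_N=R_N+E_N$ in the Theorem's proof is independent of the partition point, treating the upper triangular part via the reflected symbol $t\mapsto f(-t)$), and the normalization by $\norm{X_N}$ built into the definition of $\mathrm{qsrank}_\epsilon$ (which you address by invoking $f\not\equiv 0$ together with the stability of the singular-value symbol under zero-distributed perturbations to bound $\norm{X_N}$ away from zero).

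Your caveat about the degenerate case $f\equiv 0$ is well placed, but the suggestion that the statement can then be ``read directly on $Z_N$'' does not actually rescue it. Take $N$ even and $Z_N = N^{-1}U_N$ with the unitary $U_N = 2^{-1/2}\left[\begin{smallmatrix} I & I \\ I & -I\end{smallmatrix}\right]$; then $Z_N\sim_\sigma 0$ (all singular values equal $N^{-1}$), $\norm{Z_N}=N^{-1}$, and the maximal lower off-diagonal block equals $(N\sqrt 2)^{-1}I_{N/2}$, whose singular values are all $\norm{Z_N}/\sqrt 2$. Hence for every $\epsilon<1/\sqrt 2$ one has $\mathrm{qsrank}_\epsilon(Z_N)\geq N/2$, and no sequence $\epsilon_N\to 0$ can make $\mathrm{qsrank}_{\epsilon_N}(Z_N)/N\to 0$. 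The corollary as literally stated therefore needs the hypothesis $f\not\equiv 0$; this is always satisfied in the paper's applications, which is presumably why the issue is not raised there.
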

\noindent
Corollary~\ref{cor:quasisep-glt} guarantees that the $\epsilon$-qsrank will grow slower than $N$ for an infinitesimal choice of truncation $\epsilon_N$.

In the finite differences case, the Toeplitz matrix $T_{\alpha, N}$ for the discretization
of the Gr\"unwald-Letnikov formulas is generated by a function $f$, which
is in $L^1$ as a consequence of the decaying property of fractional binomial coefficients. Therefore,
we expect the matrix $\fdm$ in \eqref{system_FD}, defined
by diagonal scaling of $T_{\alpha, N}$ and its transpose, to have off-diagonal blocks with low numerical rank. 	

In case of high-order finite elements with maximum regularity defined on uniform meshes, a technique similar to the one used in \cite[Chapter 10.6]{GS} can be employed to prove that the sequence of the coefficient matrices is a low-rank perturbation of a diagonal-times-Toeplitz sequence --- a structure that falls again under the GLT theory. Corollary~\ref{cor:quasisep-glt} can then be applied to obtain that the quasiseparable rank of these discretizations grows slower than $N$.

\subsection{Finite differences discretization}\label{sec:anal1}
Matrices stemming from finite difference discretizations have
the form $\fds =  -\Delta x^{-\alpha}(D_+^{(m)} T_{\alpha,N} + D_-^{(m)} T_{\alpha,N}^T)$ for the steady state scenario or $\fdm =  I - \Delta t \fds$ in the time dependent case, see \eqref{system_FD}. In order to bound the $\epsilon$-qsrank of $\fds,\fdm$ we need  to look at the off-diagonal blocks of  $T_{\alpha,N} $.
 To this aim, we exploit some recent results on the singular values decay of structured matrices.

Let us begin by recalling a known fact about Cauchy matrices.

\begin{lemma}[Theorem A in \cite{fiedler2010notes}]\label{lem:cauchy-pos-def}
	Let $\mathbf x, \mathbf y$ two real vectors of length $N$,
	with ascending and descending ordered entries, respectively.
	Moreover, we denote with  $C(\mathbf x, \mathbf y)$ the Cauchy matrix defined by
	\[
	C_{ij} = \frac{1}{x_i - y_j}, \qquad
	i,j = 1, \ldots, N.
	\]
	If $C(\mathbf x, \mathbf y)$ is symmetric and
	$x_i \in [a, b]$ and $y_j \in [c, d]$ with $a > d$,
	then $C(\mathbf x, \mathbf y)$ is positive definite.
\end{lemma}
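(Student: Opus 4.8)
The plan is to establish positive definiteness through Sylvester's criterion, by showing that every leading principal minor of $C(\mathbf x, \mathbf y)$ is strictly positive. The key tool is the classical Cauchy determinant formula
\[
\det\left[ \frac{1}{x_i - y_j} \right]_{i,j=1}^{k} = \frac{\prod_{1 \le i < j \le k}(x_i - x_j)(y_j - y_i)}{\prod_{i,j=1}^{k}(x_i - y_j)},
\]
which I would either quote or derive by the standard induction based on row operations that expose the rational dependence on $x_k$. Since the leading $k \times k$ submatrix of $C(\mathbf x, \mathbf y)$ is exactly the Cauchy matrix built from $x_1, \dots, x_k$ and $y_1, \dots, y_k$ --- which inherit the ordering and separation hypotheses --- it suffices to control the sign of the right-hand side for each $k$.

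First I would examine the numerator. Because $\mathbf x$ is ascending we have $x_i - x_j < 0$ for $i < j$, and because $\mathbf y$ is descending we have $y_j - y_i < 0$ for $i < j$; each factor $(x_i - x_j)(y_j - y_i)$ is therefore a product of two negative numbers and hence strictly positive, so the whole numerator is positive. Next, the separation hypothesis $a > d$ gives $x_i \ge a > d \ge y_j$, whence every factor $x_i - y_j$ in the denominator is strictly positive. Consequently each leading principal minor is a ratio of positive quantities and is strictly positive; combined with the symmetry hypothesis, Sylvester's criterion yields positive definiteness.

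The only delicate point in this route is bookkeeping the sign conventions in the Cauchy determinant formula correctly, since a misplaced sign would flip the conclusion; a short check on the $2 \times 2$ case pins down the convention. An alternative, determinant-free argument sidesteps the issue entirely: the symmetry $C_{ij} = C_{ji}$ forces $x_i + y_i = x_j + y_j$, so $y_i = s - x_i$ for a constant $s$ and, writing $z_i := x_i - s/2$, one has $C_{ij} = (z_i + z_j)^{-1}$ with all $z_i > 0$ by $a > d$. The identity $(z_i+z_j)^{-1} = \int_0^\infty e^{-(z_i+z_j)t}\, dt$ then gives, for every real $v$,
\[
v^\top C v = \int_0^\infty \Bigl( \sum_{i=1}^N v_i e^{-z_i t} \Bigr)^2 dt \ge 0,
\]
with equality only if $v = 0$, since distinct $z_i$ make the exponentials $t \mapsto e^{-z_i t}$ linearly independent. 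I would present the Sylvester argument as the main proof and keep this integral representation as an independent cross-check.
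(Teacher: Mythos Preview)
Your proof is correct, but note that the paper does not actually prove this lemma: it is stated as a known fact and attributed to Fiedler (``Theorem A in \cite{fiedler2010notes}''), with no argument given. So there is no paper proof to compare against; you are supplying what the paper merely cites.

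Both of your routes are sound. The Sylvester-criterion argument via the Cauchy determinant formula is the most direct and is essentially the classical proof; your sign bookkeeping is correct (ascending $\mathbf x$ and descending $\mathbf y$ make each numerator factor $(x_i-x_j)(y_j-y_i)$ a product of two negatives, and the separation $a>d$ makes every denominator factor positive). The integral-representation alternative is elegant and determinant-free: the observation that symmetry forces $x_i+y_i$ to be constant, reducing to $C_{ij}=(z_i+z_j)^{-1}$ with $z_i=(x_i-y_i)/2>0$, is exactly right, and the Laplace-transform identity then exhibits $C$ as a Gram-type matrix. The only tacit assumption in the linear-independence step is that the $z_i$ are distinct, which follows from the $x_i$ being strictly increasing (as they must be for $C$ to be nonsingular at all). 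Either argument would serve; the first is closer in spirit to the referenced source.
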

We combine the previous result with a technique inspired by \cite{bhatia2006infinitely}, to prove that a Hankel matrix built with the binomial coefficients arising in the Gr\"unwald-Letnikov expansion is positive semidefinite.
\begin{lemma} \label{lem:hadamard-product}
	Consider the Hankel matrix $H$ defined as
	\[
	H = (h_{ij}), \qquad
	h_{ij} = g_{i+j}^{(\alpha)},
	\]
	for $1 \leq \alpha \leq 2$. Then, $H$ is positive semidefinite.
\end{lemma}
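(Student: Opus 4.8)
The plan is to produce an explicit integral representation of the fractional binomial coefficients $g_k^{(\alpha)}$ that exhibits $H$ as a moment (Gram) matrix of a nonnegative measure, which is then positive semidefinite by construction. This is precisely the continuous counterpart of the Cauchy/infinite-divisibility machinery underlying \cite{bhatia2006infinitely,fiedler2010notes}.

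First I would put $g_k^{(\alpha)} = (-1)^k\binom{\alpha}{k}$ in closed form. Starting from $\binom{\alpha}{k} = \Gamma(\alpha+1)/(\Gamma(k+1)\Gamma(\alpha-k+1))$ and applying the reflection formula to $\Gamma(\alpha-k+1)$, one obtains
\[
g_k^{(\alpha)} = -\frac{\sin(\pi\alpha)}{\pi}\,\frac{\Gamma(\alpha+1)\,\Gamma(k-\alpha)}{\Gamma(k+1)}.
\]
Since $\alpha\in[1,2]$ forces $\sin(\pi\alpha)\le 0$, the prefactor $-\sin(\pi\alpha)/\pi$ is nonnegative, consistent with the positivity $g_k^{(\alpha)}>0$ for $k\ge 2$ recalled after \eqref{toeplitz_FD}.

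Second, I would convert the ratio of Gamma functions into a Beta integral: for $k>\alpha$ (hence for every entry of $H$, where $k=i+j\ge 2$) one has $\frac{\Gamma(k-\alpha)}{\Gamma(k+1)} = \frac{1}{\Gamma(\alpha+1)}\int_0^1 t^{k-\alpha-1}(1-t)^\alpha\,dt$, so that
\[
g_{i+j}^{(\alpha)} = -\frac{\sin(\pi\alpha)}{\pi}\int_0^1 t^{i}\,t^{j}\; t^{-\alpha-1}(1-t)^\alpha\,dt.
\]
This is the key step, and the part I expect to be the main obstacle: I must verify that the integral converges (the most singular factor is $t^{i+j-\alpha-1}$, whose exponent satisfies $i+j-\alpha-1\ge 1-\alpha>-1$ because $\alpha<2$, hence it is integrable at the origin, while $(1-t)^\alpha$ stays bounded), and that the measure $d\mu(t) = -\frac{\sin(\pi\alpha)}{\pi}\,t^{-\alpha-1}(1-t)^\alpha\,dt$ is nonnegative on $(0,1)$.

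Finally, for any $v\in\mathbb R^N$ the representation gives $v^{T} H v = \int_0^1 \big(\sum_i v_i t^{i}\big)^2\,d\mu(t)\ge 0$, so $H\succeq 0$. The endpoints $\alpha\in\{1,2\}$ are degenerate (all relevant $g_k^{(\alpha)}$ vanish for $\alpha=1$, and only $g_2^{(2)}=1$ survives for $\alpha=2$) and are checked by inspection. As an alternative entirely in the spirit of \cite{bhatia2006infinitely}, one can discretize $\mu$ and recognize the resulting kernel as a Hadamard power of a Cauchy matrix $[\,1/(i+j-\beta)\,]$: choosing $x_i=i$ in ascending order and $y_j=\beta-j$ in descending order with $\beta<2$ makes this exactly the setting of Lemma~\ref{lem:cauchy-pos-def}, and the Schur product theorem then delivers the claim.
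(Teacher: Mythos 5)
Your proof is correct, but it takes a genuinely different route from the paper's. The paper never produces a closed form for $g_k^{(\alpha)}$: it writes $g_k^{(\alpha)} = \alpha(\alpha-1)\,\Gamma(k-\alpha)/\bigl(\Gamma(k+1)\Gamma(2-\alpha)\bigr)$, invokes the Gauss limit formula for the Gamma function to express $H$ as a limit of Hadamard products $H_0 \circ \cdots \circ H_m$ of Hankel matrices with entries $(H_p)_{ij} = \frac{i+j+1+p}{i+j-\alpha+p} = 1 + \frac{\alpha+1}{i+j-\alpha+p}$, writes each $H_p$ as $\mathbf e \mathbf e^T + (\alpha+1)\,C(\mathbf x, -\mathbf x)$ with the Cauchy matrix positive definite by Lemma~\ref{lem:cauchy-pos-def}, and concludes via the Schur product theorem and closedness of the positive semidefinite cone under limits. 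You instead combine the reflection formula with a Beta integral to exhibit $H$ as a Hausdorff moment matrix: $h_{ij} = \int_0^1 t^i t^j \, d\mu(t)$ with $d\mu(t) = -\frac{\sin(\pi\alpha)}{\pi}\, t^{-\alpha-1}(1-t)^{\alpha}\,dt \geq 0$, so that $v^T H v = \int_0^1 \bigl(\sum_i v_i t^i\bigr)^2 d\mu(t) \geq 0$. All the steps check out: the reflection/Beta manipulations are valid for $\alpha \in (1,2)$ and $k = i+j \geq 2 > \alpha$, the singularity $t^{i+j-\alpha-1}$ at the origin is integrable because $i+j-\alpha-1 \geq 1-\alpha > -1$, the sign of $-\sin(\pi\alpha)$ is correct on $[1,2]$, and the endpoint cases $\alpha \in \{1,2\}$ are indeed trivial by inspection. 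Your argument is more elementary and self-contained --- no Schur product theorem, no limiting argument, no Fiedler lemma --- and it buys an explicit representing measure, which could be reused for quantitative purposes; the paper's Hadamard-product route is the one that stays within the infinite-divisibility machinery of the cited works and generalizes in that direction.

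One caveat: your closing aside does not hold up as stated. Discretizing $\mu$ yields a sum of rank-one matrices $[\,t_\ell^{\,i}\, t_\ell^{\,j}\,]$, not a Hadamard power of a Cauchy matrix; and expanding $(1-t)^{\alpha}$ binomially inside your integral expresses $H$ as a combination of Cauchy matrices $[\,1/(i+j+m-\alpha)\,]$ whose $m=1$ coefficient is $g_1^{(\alpha)} = -\alpha < 0$, so positive semidefiniteness does not follow termwise. Since this remark is explicitly an alternative and your main argument is complete, it costs you nothing, but it should be deleted or repaired.
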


\begin{proof}
	Observe that for $k\geq 2$ we can rewrite $g_{k}^{(\alpha)}$ as follows:
	\begin{align*}
	g_{k}^{(\alpha)} &= \frac{(-1)^k}{k!} \alpha (\alpha - 1) \ldots (\alpha - k + 1) \\
	&= \frac{\alpha (\alpha - 1)}{k!} (k-\alpha-1) (k-\alpha-2) \ldots (2 - \alpha) \\
	&= \alpha (\alpha - 1) \frac{\Gamma(k-\alpha)}{\Gamma(k+1) \Gamma(2-\alpha)}.
	\end{align*}
	By using the Gauss formula for the gamma function:
	\[
	\Gamma(z) = \lim_{m \to \infty} \frac{m! m^z}{z (z + 1) (z + 2) \ldots (z+m)}, \quad
	z \neq \{ 0, -1, -2, \ldots \},
	\]
	we can rewrite the entries of the matrix $H$ as
	\[
	g_{k}^{(\alpha)} = \alpha (\alpha - 1) \lim_{m \to \infty} \frac{1}{m! m^3} \prod_{p = 0}^{m} \frac{k+1+p}{k-\alpha+p} (2 - \alpha + p).
	\]
	This implies that the matrix $H$ can be seen as the limit of
	Hadamard products of Hankel matrices. Since positive semidefiniteness is preserved
	by the Hadamard product (Schur product theorem) and by the limit operation \cite{bhatia2006infinitely}, if the Hadamard
	products
	\[
	H_0 \circ \ldots \circ H_{m}, \qquad (H_p)_{ij} = \frac{i+j+1+p}{i+j-\alpha+p}
	\]
	are positive semidefinite for every $m$ then $H$ is also
	positive semidefinite. Notice that we can write
	\[
	(H_p)_{ij} = \frac{i+j+1+p}{i+j-\alpha+p} = 1 + \frac{\alpha+1}{i+j-\alpha+p}
	\]
	that can be rephrased in matrix form as follows:
	\[
	H_p = \mathbf e \mathbf e^T +
	(\alpha + 1) \cdot C(\mathbf x, - \mathbf x), \qquad
	\mathbf x =
	\begin{bmatrix}
	1 \\
	\vdots \\
	N
	\end{bmatrix}  + \frac{p-\alpha}{2} \mathbf e, \qquad
	\mathbf e =
	\begin{bmatrix}
	1 \\
	\vdots \\
	1
	\end{bmatrix}.
	\]
	
	All the components of $\mathbf x$ are positive, since $\alpha < 2$.
	This implies, thanks to Lemma~\ref{lem:cauchy-pos-def}, that the Cauchy
	matrix $C(\mathbf x, -\mathbf x)$ is positive definite. Summing it
	with the positive semidefinite matrix on the left retains this
	property, so $H_p$ is positive semidefinite as well.
\end{proof}

The next result ensures that positive semidefinite
Hankel matrices are numerically low-rank.

\begin{lemma}[Theorem 5.5 in \cite{Beckermann2016}] \label{beckermann}
	Let $H$ be a positive semidefinite Hankel matrix of size $N$. Then,
	the $\epsilon$-rank of $H$ is bounded by
	\[
	\mathrm{rank}_{\epsilon}(H) \leq 2 + 2\left\lceil
	\frac{2}{\pi^2} \log \left( \frac {4}{\pi}N \right)
	\log\left(\frac{16}{\epsilon}\right)
	\right\rceil =: \mathfrak B(N, \epsilon).
	\]
\end{lemma}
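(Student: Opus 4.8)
The plan is to split the statement into a soft reduction and an analytic core, and to treat them separately. Since $H$ is positive semidefinite we have $\norm{H} = \sigma_1(H)$, and by the Eckart--Young theorem the minimal rank attainable within distance $\epsilon\norm{H}$ of $H$ equals the smallest $r$ with $\sigma_{r+1}(H) \le \epsilon\,\sigma_1(H)$. Hence, by the definition of $\epsilon$-rank, it suffices to establish a geometric singular value decay of the form
\[
\sigma_{2k+2}(H) \le 16\, \rho^{-k}\, \sigma_1(H), \qquad \rho = \exp\!\left( \frac{\pi^2}{2\log(4N/\pi)} \right),
\]
and then invert it: imposing $16\rho^{-k} \le \epsilon$ forces $k \ge \frac{2}{\pi^2}\log(\tfrac{4}{\pi}N)\log(\tfrac{16}{\epsilon})$, so the choice $k = \lceil \tfrac{2}{\pi^2}\log(\tfrac4\pi N)\log(\tfrac{16}{\epsilon})\rceil$ yields a matrix of rank $2k+1$ within the prescribed tolerance, and $2k+1 \le \mathfrak B(N,\epsilon)$. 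This bookkeeping is routine once the decay is available.

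The core is therefore the decay estimate, and for it I would invoke the theory of singular values of matrices with displacement structure. The master estimate I would use states: if $X$ solves a Sylvester equation $AX - XB = W$ with $\rank(W) = \nu$, where $A,B$ are normal with spectra contained in disjoint sets $E \supseteq \Lambda(A)$ and $F \supseteq \Lambda(B)$, then $\sigma_{1+\nu k}(X) \le Z_k(E,F)\, \sigma_1(X)$, with $Z_k(E,F)$ the Zolotarev number quantifying how small a degree-$k$ rational function can be made on $E$ relative to $F$. The cleanest instance is a Cauchy matrix $C_{ij} = (x_i - y_j)^{-1}$, which satisfies $D_x C - C D_y = \mathbf e\mathbf e^T$ with diagonal (hence normal) generators whose real spectra separate automatically when the node sets are disjoint. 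A Hankel matrix instead carries a displacement equation of rank $\nu = 2$, but with respect to \emph{shift} operators that are nilpotent and share the single eigenvalue $0$; these offer no spectral separation, which is exactly why positive semidefiniteness is indispensable. I would use positivity, through the associated moment and orthogonal-polynomial structure (equivalently, a Cayley/Möbius change of variable), to transfer the singular value decay of $H$ to that of a Cauchy-type matrix whose diagonal generators have real spectra confined to two disjoint real intervals.

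With such a pair of intervals in hand, the rate follows from the classical Gonchar--Zolotarev estimate for two real intervals, which bounds $Z_k(E,F)$ by a constant times $\exp(-k\pi^2/(2\log(4N/\pi)))$ once the relevant geometric quantity (the cross-ratio of the two intervals) is seen to grow quadratically in $N$; combined with $\nu = 2$ this produces precisely $\sigma_{2k+2}(H) \le 16\rho^{-k}\sigma_1(H)$. I expect the main obstacle to be exactly this middle step: exhibiting for an arbitrary positive semidefinite Hankel matrix a displacement equation with \emph{separated real} spectra, and verifying that the induced interval geometry yields the denominator $2\log(4N/\pi)$ rather than a logarithm with a different constant. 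Everything flanking it --- the Eckart--Young reduction and the substitution of the Zolotarev rate --- is mechanical, so I would concentrate the effort on making the passage to two real intervals explicit and on tracking the cross-ratio, since the final constants $16/\epsilon$ and $4N/\pi$ are entirely dictated by that geometry.
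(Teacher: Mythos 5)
First, a point of order: the paper never proves this statement --- it is imported verbatim as Theorem 5.5 of \cite{Beckermann2016}, so the only proof to compare against is the one in that reference. Your outer shell agrees with it: the Eckart--Young reduction of $\mathrm{rank}_\epsilon$ to a geometric singular value decay of the form $\sigma_{2k+2}(H)\le 16\rho^{-k}\sigma_1(H)$, the master displacement estimate $\sigma_{1+\nu k}(X)\le Z_k(E,F)\,\sigma_1(X)$ for normal coefficients with separated spectra, the observation that the natural Hankel displacement has rank $2$ but is taken with respect to nilpotent shifts (hence no spectral separation), and the consequent indispensability of positive semidefiniteness are all faithful to \cite{Beckermann2016}; your bookkeeping from the decay estimate to $\mathfrak B(N,\epsilon)$ is also correct.

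The genuine gap is the core step, which you flag yourself and then leave unexecuted: producing, for an \emph{arbitrary} positive semidefinite Hankel matrix, a displacement equation with normal coefficients whose spectra are separated \emph{with geometry controlled by $N$ alone}. Your proposed route --- pass through the moment/orthogonal-polynomial structure to a Cauchy-type matrix whose diagonal generators lie in two disjoint real intervals whose cross-ratio grows like $N^2$ --- meets a concrete obstruction: by the Hamburger representation $H_{jk}=\int x^{j+k}\,d\mu(x)$, the nodes of $\mu$ are arbitrary real numbers, so any intervals (or Jacobi-matrix spectra) extracted from them are measure-dependent; their separation can be arbitrarily small or zero, and no cross-ratio bound depending only on $N$ can emerge from that geometry. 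The dimension-only dependence is exactly the miracle of the theorem, and \cite{Beckermann2016} obtains it by a different mechanism: the moment representation writes $H$ (up to a degenerate correction handled by a limiting argument) as a Gram matrix $K^*K$, where $K$ is a Krylov matrix with Hermitian --- indeed real diagonal --- argument, so that $\sigma_j(H)=\sigma_j(K)^2$; the singular value decay is then proved for such Krylov matrices, where the displacement equation can be arranged so that the resulting Zolotarev problem involves sets determined by the dimension alone and not by the eigenvalues. The squaring step is also what produces the constants you reverse-engineered: the prefactor $16=4^2$ and the Hankel rate $\pi^2/\bigl(2\log(4N/\pi)\bigr)$, which is exactly double the underlying Krylov rate $\pi^2/\bigl(4\log(4N/\pi)\bigr)$. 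Without this (or an equivalent) node-independence device, your plan cannot close; with it, the rest of your write-up goes through essentially as stated.
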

We are now ready to state a bound for the $\epsilon$-qsrank of $T_{\alpha,N}$.
\begin{lemma} \label{lem:fd-qsrank}
	Let $T_{\alpha, N}$ be the lower Hessenberg Toeplitz matrix defined in \eqref{toeplitz_FD}.
	Then, for every $\epsilon > 0$, the $\epsilon$-qsrank of $T_{\alpha, N}$
	is bounded by
	\[
	\mathrm{qsrank}_{\epsilon} (T_{\alpha,N})
	\leq \mathfrak B\left(N, \frac{\epsilon}{2}\right) =
	2 + 2\left\lceil
	\frac {2}{\pi^2} \log \left( \frac{4}{\pi}N \right)
	\log\left(\frac{32}{\epsilon}\right)
	\right\rceil.
	\]
\end{lemma}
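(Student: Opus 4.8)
My plan is to bound, for each off-diagonal block $Y$ of $T_{\alpha,N}$, the minimal rank attainable after a perturbation of norm at most $\epsilon\norm{T_{\alpha,N}}$, and then to take the maximum over blocks, as the definition of $\mathrm{qsrank}_\epsilon$ requires. Since $T_{\alpha,N}$ is lower Hessenberg, every block lying strictly above the diagonal contains at most the single superdiagonal entry $-g_0^{(\alpha)}$, hence has rank at most $1 \le \mathfrak B(N,\epsilon/2)$ and needs no perturbation. The substantive case is a lower corner block $Y$ consisting of rows $r+1,\dots,N$ and columns $1,\dots,r$, whose $(a,b)$ entry equals $-g_{r+a-b+1}^{(\alpha)}$. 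The key observation, exactly as in the proof of the Toeplitz theorem of Section~\ref{sec:glt}, is that multiplying $Y$ on the right by the counter-identity $J$ reverses the column order and converts this Toeplitz block into a Hankel one: a direct index computation gives $(-YJ)_{ab} = g_{a+b}^{(\alpha)}$, which is precisely the leading $(N-r)\times r$ submatrix of the Hankel matrix $H$ of Lemma~\ref{lem:hadamard-product}. Because $J$ is orthogonal, $Y$ and $-YJ$ share the same singular values.

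Next I would bring in positive semidefiniteness. By Lemma~\ref{lem:hadamard-product} the size-$N$ Hankel matrix $H$ with entries $g_{i+j}^{(\alpha)}$ is positive semidefinite, so Lemma~\ref{beckermann} applies and provides a splitting $H = R + E$ with $\mathrm{rank}(R) \le \mathfrak B(N,\epsilon/2)$ and $\norm{E} \le \tfrac{\epsilon}{2}\norm{H}$. Restricting this splitting to the leading $(N-r)\times r$ submatrix preserves the rank bound and can only decrease the spectral norm of the error, so $-YJ = R' + E'$ with $\mathrm{rank}(R') \le \mathfrak B(N,\epsilon/2)$ and $\norm{E'} \le \tfrac{\epsilon}{2}\norm{H}$. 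Undoing the flip, the perturbation $\delta Y := E'J$ satisfies $\norm{\delta Y} = \norm{E'}$ and makes $Y + \delta Y = -R'J$ a matrix of rank at most $\mathfrak B(N,\epsilon/2)$.

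What remains, and this is the one point requiring care, is the normalization: the $\epsilon$-qsrank measures perturbations relative to $\norm{T_{\alpha,N}}$, whereas Beckermann's estimate controls $\norm{E'}$ relative to $\norm{H}$. I therefore need $\norm{H} \le 2\norm{T_{\alpha,N}}$, which is exactly what lets the threshold $\epsilon/2$ absorb into $\epsilon$ in the final chain $\norm{\delta Y} \le \tfrac{\epsilon}{2}\norm{H} \le \epsilon\norm{T_{\alpha,N}}$. This comparison is elementary: $H$ is entrywise nonnegative with row sums $\sum_{j} g_{i+j}^{(\alpha)} \le \sum_{k\ge 2} g_k^{(\alpha)} = \alpha - 1 < 1$ (using $\sum_{k\ge 0} g_k^{(\alpha)} = 0$ together with $g_0^{(\alpha)}=1$ and $g_1^{(\alpha)}=-\alpha$), so by symmetry $\norm{H} \le \norm{H}_\infty \le \alpha - 1$, while $\norm{T_{\alpha,N}} \ge \max_{ij}\lvert (T_{\alpha,N})_{ij}\rvert = \lvert g_1^{(\alpha)}\rvert = \alpha$; hence $\norm{H} \le \alpha-1 < \alpha \le \norm{T_{\alpha,N}}$. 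Combining the trivial upper blocks with this bound on every lower block yields $\mathrm{qsrank}_\epsilon(T_{\alpha,N}) \le \mathfrak B(N,\epsilon/2)$, which is the claimed expression. The genuine content of the argument is the structural identification of each off-diagonal block, after the column flip, with a submatrix of a positive semidefinite Hankel matrix; once that is in place the quantitative bound is inherited from Lemmas~\ref{lem:hadamard-product} and~\ref{beckermann}, and the normalization is routine.
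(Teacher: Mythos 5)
Your proof is correct, and while it rests on the same two pillars as the paper's argument --- the positive semidefiniteness of the Hankel matrix of Gr\"unwald--Letnikov coefficients (Lemma~\ref{lem:hadamard-product}) and Beckermann's bound (Lemma~\ref{beckermann}), accessed after a flip by the counter-identity --- the two technical steps in between are genuinely different. The paper does not view the flipped corner block as a submatrix of the global $N\times N$ Hankel matrix $H=(g_{i+j}^{(\alpha)})$; instead it embeds the maximal $s\times t$ block $Y$ into an auxiliary $h\times h$ matrix $A$ with $h=\max\{s,t\}$ and $A_{ij}=-g_{1+i-j+h}^{(\alpha)}$, shows that $-AJ$ is positive semidefinite Hankel, and applies Beckermann to $A$. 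The price of that embedding is the norm comparison $\norm{A}_2\leq 2\norm{T_{\alpha,N}}_2$, obtained through a $2\times 2$ block partition of $A$ whose blocks are matched entrywise (via monotonicity of the coefficients) with subdiagonal blocks of $T_{\alpha,N}$; this factor $2$ is precisely the origin of the $\epsilon/2$, hence of the $32/\epsilon$ in the statement. Your route replaces both steps: the direct identification of $-YJ$ with a leading submatrix of the size-$N$ Hankel matrix $H$, and the elementary estimate $\norm{H}_2\leq\norm{H}_\infty\leq\sum_{k\geq 2}g_k^{(\alpha)}=\alpha-1<\alpha\leq\norm{T_{\alpha,N}}_2$, which uses the identity $\sum_{k\geq 0}g_k^{(\alpha)}=0$ (legitimate here since $g_k^{(\alpha)}=\mathcal O(k^{-\alpha-1})$, see Section~\ref{sec:grunw}). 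This is cleaner and in fact sharper: since $\norm{H}\leq\norm{T_{\alpha,N}}$, you could invoke Beckermann with tolerance $\epsilon$ rather than $\epsilon/2$ and obtain the improved bound $\mathfrak B(N,\epsilon)$, i.e.\ $16/\epsilon$ in place of $32/\epsilon$; keeping $\epsilon/2$ as you do simply recovers the paper's (weaker) constant. One presentational point: state explicitly, as the paper does, that an arbitrary block in the strictly lower triangular part is a submatrix of some maximal corner block, so that restricting the perturbation --- the same mechanism you already use to pass from $H$ to $-YJ$ --- covers the general case.
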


\begin{proof}
	We can verify the claim on the lower triangular part, since every off-diagonal block in the upper one has rank at most $1$.
	Let $Y \in \mathbb C^{s \times t}$ 
	be any lower off-diagonal block of $T_{\alpha,N}$. Without loss of generality we can assume that $Y$ is maximal, i.e. $s+t=N$. In fact, if $\mathrm{rank}(Y+\delta Y)=k$ and $\norm{\delta Y}_2\leq \epsilon\norm{T_{\alpha, N}}_2$ then the submatrices of $\delta Y$ verify the analogous claim for the corresponding submatrices of $Y$. 
	
	The entries of $Y$ are given by $Y_{ij} = -g_{1+i-j+t}^{(\alpha)}$. 
	Let $h := \max\{s, t\}$, and the $h \times h$ matrix
	$A$ defined by $A_{ij} := -g_{1+i-j+h}^{(\alpha)}$. It is immediate to verify
	that $Y$ coincides with  either the last $t$ columns or the first $s$ rows 
	of $A$. In fact, for every $1 \leq i \leq s$ and 
	$1 \leq j \leq t$ we have $Y_{ij} = -g_{1+i-j+t}^{(\alpha)} = -g_{1+i-(j-t+h)+h}^{(\alpha)} = A_{i,j-t+h}$. In particular,
	$Y$ is a submatrix of $A$ and therefore $\norm{Y}_2 \leq 
	\norm{A}_2$. Two possible arrangements of $Y$ and $A$ are pictorially described by the following figures. 
	\[
	\begin{minipage}{.45\linewidth}
	\centering
	\begin{tikzpicture}[scale=0.7]
	\node at (-2.9,1.5) { $A = $} ;
	\draw (0,0) rectangle (4,4);
	\draw (0,4) -- (4,0);
	\draw (0,3) -- (1,3) -- (1,0);
%	\draw (0,2) -- (1,2);
	\draw[dashed] (-2,3) rectangle (0,0);
%	\draw[dashed] (-2,2) -- (0,2);
%	\node at (.5,2.5) { $Y_1$ };
	\node at (.5,1.5) {  $Y$ };
%	\node at (.5,1) { $Y_2$ };
	\node at (-1,1) {  };
	\end{tikzpicture}
	\end{minipage}~\begin{minipage}{.45\linewidth}
	\centering
	\begin{tikzpicture}[scale=0.7]
	\node at (-0.9,-.5) { $A = $} ;
	\draw (0,0) rectangle (4,4);
	\draw (0,4) -- (4,0);
	\draw (0,1) -- (3,1) -- (3,0);
	%	\draw (0,2) -- (1,2);
	\draw[dashed] (0,1) rectangle (3,-2);
	%	\draw[dashed] (-2,2) -- (0,2);
	%	\node at (.5,2.5) { $Y_1$ };
	\node at (1.5,.5) {  $Y$ };
	%	\node at (.5,1) { $Y_2$ };
	\node at (-1,1) {  };
	\end{tikzpicture}
	\end{minipage}
	\]
	In order to estimate $\norm{A}_2$, we
	perform the following $2 \times 2$ block
	partitioning: 
	\[
	  A=\begin{bmatrix}A^{(11)}&A^{(12)}\\ A^{(21)}& A^{(22)}\end{bmatrix}, \qquad A^{(ij)}\in\mathbb C^{m_{ij}\times n_{ij}},\qquad 
	  \begin{cases}
	   m_{1j}=n_{i1}=\lceil\frac h2\rceil \\
	   m_{2j}=n_{i2}=\lfloor\frac h2\rfloor
	  \end{cases}. 
	\]
	Recalling that $h$ is the maximum dimension of the block
	$Y$, and therefore $h \leq N - 1$, 
	this choice yields $m_{ij}+n_{ij}\leq N$. In passing, 
	we remark that this partitioning is not necessarily conformal
	to $Y$, but is performed with the sole purpose of 
	estimating $\norm{A}_2$ by a constant times the norm
	of $T_{\alpha,N}$. Indeed, 
	we now consider the subdiagonal block $T^{(ij)}$ of 
	$T_{\alpha,N}$ defined by (using MATLAB-style notation)
	\[
	  T^{(ij)} := T_{\alpha, N}(N-m_{ij}+1:N, N-m_{ij}-n_{ij}+1:N-m_{ij}), \qquad 
	  i,j = 1,2
	\] which is of dimension $m_{ij}\times n_{ij}$
	and well defined because $m_{ij} + n_{ij} \leq N$.
	These blocks verify
%	\begin{equation}\label{eq:ineq}
	$|T^{(ij)}| \geq |A^{(ij)}|$ for 
	every $i,j = 1,2$, which can be verified using the 
	property $g_j^{(\alpha)}>g_{j+1}^{(\alpha)}>0$ for
	all $j\geq 2$
	(see Section~\ref{sec:grunw}).
	%,\qquad i,j=1,2,\qquad (i,j)\neq (1,2),
	%\end{equation}
	Since both $T^{(ij)}$ and $A^{(ij)}$ 
	are nonpositive, we have for the monotonicity of the
	$2$ norm that $\norm{A^{(ij)}}_2 \leq \norm{T^{(ij)}}_2$. 
	In addition, we exploit the relation
	\begin{align*}
	\norm{A}_2 &\leq \left\lVert\begin{bmatrix}A^{(11)}&\\ & A^{(22)}\end{bmatrix}\right\rVert_2 +\left\lVert\begin{bmatrix}&A^{(12)}\\ A^{(21)}& \end{bmatrix}\right\rVert_2 \\
	&= \max\{\norm{A^{(11)}}_2,\norm{A^{(22)}}_2\} +\max\{\norm{A^{(12)}}_2,\norm{A^{(21)}}_2\}
	\end{align*}	
	to get $\norm{A}_2\leq 2\norm{T_{\alpha,N}}_2$.
	
	Let $J$ be the $h \times h$ counter-identity; in light of  Lemma~\ref{lem:hadamard-product}, the matrix $-AJ$ is Hankel and positive semidefinite.  Applying  Lemma~\ref{beckermann} to $-AJ$ with truncation $\frac{\epsilon}{2}$ we obtain
	$
	\mathrm{rank}_{\frac \epsilon 2}(A) = \mathrm{rank}_{\frac \epsilon 2}(AJ) \leq \mathfrak B(N, \frac \epsilon 2).
	$
	Since $Y$ is a submatrix of $A$ there exists $\delta Y$ such that $\norm{\delta Y}_2\leq\epsilon\norm{T_{\alpha, N}}_2$ and $\mathrm{rank}(Y+\delta Y)\leq \mathfrak B(N, \frac \epsilon 2)$. So, we conclude
	that $\mathrm{qsrank}_{\epsilon}(T_{\alpha,N}) \leq \mathfrak B(N, \frac \epsilon 2)$.
\end{proof}

\begin{corollary} \label{cor:fd-qsrank}
	Let $\fdm = I + \frac{\Delta t}{\Delta x^\alpha} (D_+^{(m)} T_{\alpha,N} + D_-^{(m)} T_{\alpha,N}^T)$  be defined as in \eqref{system_FD}, where $D_+^{(m)}$ and
	$D_-^{(m)}$ contain the samplings of $d_+(x,t_m)$ and $d_-(x,t_m)$.
	Then:
	\[
	\mathrm{qsrank}_{\epsilon}(\fdm) \leq
	3 + 2\left\lceil
	\frac 2{\pi^2} \log \left( \frac{4}{\pi}N \right)
	\log\left(\frac{32}{\hat\epsilon}\right)
	\right\rceil, \quad
	\hat\epsilon := \frac{\norm{\fdm}}{\norm{T_{\alpha,N}} \cdot \max\{\norm{D_+^{(m)}}, \norm{D_-^{(m)}}\}} \epsilon.
	\]
\end{corollary}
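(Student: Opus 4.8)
The plan is to reduce the bound for $\fdm$ to the one already established for $T_{\alpha,N}$ in Lemma~\ref{lem:fd-qsrank}, exploiting the fact that the three summands $I$, $\frac{\Delta t}{\Delta x^\alpha}D_+^{(m)}T_{\alpha,N}$ and $\frac{\Delta t}{\Delta x^\alpha}D_-^{(m)}T_{\alpha,N}^T$ contribute to each off-diagonal block in a controlled way. First I would fix an off-diagonal block $W$ of $\fdm$ and observe that, since $I$ is diagonal, it does not affect any strictly off-diagonal block; hence $W$ is the sum of the corresponding off-diagonal blocks of $\frac{\Delta t}{\Delta x^\alpha}D_+^{(m)}T_{\alpha,N}$ and of $\frac{\Delta t}{\Delta x^\alpha}D_-^{(m)}T_{\alpha,N}^T$. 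I would then treat lower and upper blocks symmetrically: in a lower off-diagonal block the upper-Hessenberg matrix $T_{\alpha,N}^T$ contributes a subblock of rank at most $1$ (only the entry on the first superdiagonal survives), while in an upper off-diagonal block the same happens for the lower-Hessenberg matrix $T_{\alpha,N}$. This isolates a single ``heavy'' term in each case.

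The second step handles that heavy term. Say we are in a lower block, so the heavy term is the off-diagonal block of $D_+^{(m)}T_{\alpha,N}$, which equals $D_R\,Y$, where $Y$ is the corresponding block of $T_{\alpha,N}$ and $D_R$ is the diagonal submatrix of $D_+^{(m)}$ selecting the rows of the block. Left multiplication by a diagonal matrix does not increase the rank, and Lemma~\ref{lem:fd-qsrank} supplies, for any chosen $\eta>0$, a perturbation $\delta Y$ with $\mathrm{rank}(Y+\delta Y)\le \mathfrak B(N,\eta/2)$ and $\norm{\delta Y}\le \eta\norm{T_{\alpha,N}}$. Setting $\delta W=\frac{\Delta t}{\Delta x^\alpha}D_R\,\delta Y$, the perturbed heavy term $D_R(Y+\delta Y)$ has rank at most $\mathfrak B(N,\eta/2)$, and adding the untouched rank-$1$ contribution of the Hessenberg term gives $\mathrm{rank}(W+\delta W)\le \mathfrak B(N,\eta/2)+1$ by subadditivity of the rank. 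The upper blocks are dealt with identically, with the roles of $D_+^{(m)},T_{\alpha,N}$ and $D_-^{(m)},T_{\alpha,N}^T$ interchanged and using $\norm{T_{\alpha,N}^T}=\norm{T_{\alpha,N}}$.

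The delicate point, and the only real obstacle, is the norm bookkeeping, because the $\epsilon$-qsrank of $\fdm$ is measured relative to $\norm{\fdm}$, whereas Lemma~\ref{lem:fd-qsrank} measures perturbations relative to $\norm{T_{\alpha,N}}$. I would track the constants explicitly: from $\norm{\delta W}\le \frac{\Delta t}{\Delta x^\alpha}\norm{D_R}\,\norm{\delta Y}\le \frac{\Delta t}{\Delta x^\alpha}\max\{\norm{D_+^{(m)}},\norm{D_-^{(m)}}\}\,\eta\,\norm{T_{\alpha,N}}$, the requirement $\norm{\delta W}\le \epsilon\norm{\fdm}$ forces the choice $\eta=\frac{\epsilon\,\norm{\fdm}}{\frac{\Delta t}{\Delta x^\alpha}\max\{\norm{D_+^{(m)}},\norm{D_-^{(m)}}\}\,\norm{T_{\alpha,N}}}$, which is exactly the threshold $\hat\epsilon$ in the statement up to the scalar $\frac{\Delta t}{\Delta x^\alpha}$; the maximum in the denominator makes this single value of $\eta$ work uniformly for both the $D_+^{(m)}$ and $D_-^{(m)}$ scalings, hence for lower and upper blocks at once. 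Finally, substituting this threshold into $\mathfrak B(N,\hat\epsilon/2)+1$ and using the explicit form of $\mathfrak B$ from Lemma~\ref{beckermann} yields $3+2\left\lceil\frac{2}{\pi^2}\log\left(\frac{4}{\pi}N\right)\log\left(\frac{32}{\hat\epsilon}\right)\right\rceil$, since halving the argument turns the $16/\epsilon$ of Lemma~\ref{beckermann} into $32/\hat\epsilon$ and the additive constant $2$ becomes $3$ after accounting for the rank-one Hessenberg contribution.
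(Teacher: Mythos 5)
Your proof is correct and follows essentially the same route as the paper: split each strictly off-diagonal block into the diagonally scaled block of $T_{\alpha,N}$ (compressed via Lemma~\ref{lem:fd-qsrank}) plus the rank-one contribution of the identity/Hessenberg terms, then do the norm bookkeeping that produces $\hat\epsilon$ and the bound $1+\mathfrak{B}(N,\hat\epsilon/2)$. The only difference is cosmetic: the paper disposes of the scalar $\frac{\Delta t}{\Delta x^\alpha}$ at the outset by invoking scaling invariance (effectively absorbing it into $D_\pm^{(m)}$), whereas you carry it explicitly and correctly observe that the resulting threshold agrees with the stated $\hat\epsilon$ only up to that factor.
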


\begin{proof}
	Clearly, the result is invariant under scaling, so we assume that 
	$\frac{\Delta t}{\Delta x^\alpha} = 1$. 
	Consider a generic off-diagonal block $Y$ of $\fdm$,
	and assume without loss of generality that it is in the lower
	triangular part.  If $Y$ does not intersect the first
	subdiagonal, then $Y$ 
	is a subblock of $D_+^{(m)} T_{\alpha, N}$, and
	so we know by Lemma~\ref{lem:fd-qsrank} that there
	exists a perturbation
	$\delta Y$
	with norm bounded by $\norm{\delta Y} \leq \norm{D_+^{(m)}} \norm{T_{\alpha,N}} \cdot \hat\epsilon$ such that $Y + \delta Y$ has rank at most $\mathfrak B(N, \frac{\hat\epsilon}{2})$.
	In particular, $\delta Y$ satisfies $\norm{\delta Y} \leq \norm{\fdm} \cdot \epsilon$.
	
	Since we have excluded one subdiagonal, for a generic off-diagonal
	block $Y$ we can find a perturbation with norm bounded
	by $\norm{\fdm} \cdot \epsilon$ such that $Y + \delta Y$
	has rank $1 + \mathfrak{B}(N, \frac{\hat\epsilon}{2})$.
\end{proof}

	\subsection{Finite element discretization}\label{sec:anal2}
We consider the left and
	right-handed fractional derivatives of order $1<\alpha<2$ in Riemann-Liouville form, defined
	as follows:
	\begin{align*} \frac{\partial^\alpha u(x,t)}{\partial^{RL}_{+} x^\alpha}&=
	\frac{1}{\Gamma(2-\alpha)}\frac{\partial^2}{\partial
		x^2}\int_L^x\frac{u(\xi,t)}{(x-\xi)^{\alpha-1}}d\xi,\\
	\frac{\partial^\alpha u(x,t)}{\partial^{RL}_{-}
		x^\alpha}&=\frac{1}{\Gamma(2-\alpha)}\frac{\partial^2}{\partial
		x^2}\int_x^R\frac{u(\xi,t)}{(\xi-x)^{\alpha-1}}d\xi,
	\end{align*} where
	$\Gamma(\cdot)$ is the gamma function. From now on we focus on the discretization of the left-handed derivative; the results for the right-handed one are completely analogous. In this section we consider
	the case of constant diffusion coefficients, but as outlined
	in Remark~\ref{rem:nonconstantidiffusion} this is not restrictive.
	
	Let us discretize the operator $\frac{\partial ^\alpha}{\partial^{RL}_+}$ by using a finite
	element method. More precisely, we choose a set of basis functions
	$\mathcal B:=\{\varphi_1,\dots,\varphi_N\}$, normalized
	so that $\int_L^R\varphi_i(x)\ dx = 1$. This leads to the stiffness  matrix
    $\femsp$ defined by
    \begin{equation}\label{eq:fe-int}
    (\femsp)_{ij}= \langle \varphi_i, \frac{\partial^\alpha \varphi_j}{\partial^{RL}_+ x^\alpha}\rangle=\frac{1}{\Gamma(2-\alpha)}\int_L^R
    \varphi_{i}(x)\frac{\partial^2}{\partial
    	x^2}\int_{L}^x\varphi_j(y)(x-y)^{-\alpha+1}dy\ dx.
    \end{equation}	

    	A key ingredient in our analysis is requiring  a separation property for the elements of the basis. This is formalized in the following definition.
    	
    	\begin{definition}\label{def:overlap} We say that the basis $\mathcal B$ has
    		\emph{$\delta$-overlapping $k$}, with $\delta\geq 0$, if $\forall j_1,j_2\in\{1,\dots, N\}$ such that $j_2-j_1\geq k$, there exists
    		$x_0\in[L,R]$ such that
    		\[ \supp(\varphi_j)\subset [L,x_0-\delta]\ ;\  j< j_1, \qquad
    		\supp(\varphi_j)\subset [x_0+\delta,R]\ ;\ j> j_2.
    		\] When $\delta=0$ we simply say that $\mathcal B$ has overlapping
    		$k$.
    	\end{definition}
    	The property of being a basis with $\delta$-overlapping $k$ is described
    	pictorially in Figure~\ref{fig:overlapping}.

    Our strategy for proving the presence of the (approximate) quasiseparable structure in $\femsp$, is to show that any off-diagonal block can be approximated summing a few integrals of separable functions. 	
    In view of the following result, this implies the low-rank structure.
    	\begin{lemma} \label{lem:separable-integral}Let $(g_i)_{i=1}^m, (h_j)_{j=1}^n$ be families of
    		functions on
    		$[
    		a,b
    		]$
    		and
    		$[
    		c,d
    		]$ and define $\Gamma_{ij}(x,y)=g_i(x)h_j(y)$ for $i = 1, \ldots, m$ and $j = 1, \ldots, n$.
    		Consider the functional
    		\[
    		I(\Gamma_{ij}) = \int_{a}^{b} \int_{c}^{d} \Gamma_{ij}(x,y) \ dx \ dy
    		\]
    		Then, the matrix $X =(x_{ij})$ with $ x_{ij}:=I(\Gamma_{ij})\in\mathbb{C}^{m\times n}$ has rank $1$.
    	\end{lemma}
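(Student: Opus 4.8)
The plan is to show that the matrix $X$ factors as an outer product of two vectors, which immediately gives rank at most $1$. The key observation is that the double integral of a separable integrand decouples completely. First I would write out the functional explicitly: since $\Gamma_{ij}(x,y) = g_i(x) h_j(y)$, the integrand factors, and because $x$ ranges over $[a,b]$ while $y$ ranges over $[c,d]$, Fubini's theorem lets me separate the two integrations.

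Concretely, I would compute
\[
x_{ij} = \int_a^b \int_c^d g_i(x) h_j(y) \ dy \ dx = \left( \int_a^b g_i(x) \ dx \right) \left( \int_c^d h_j(y) \ dy \right).
\]
Defining the scalars $p_i := \int_a^b g_i(x) \ dx$ for $i = 1, \ldots, m$ and $q_j := \int_c^d h_j(y) \ dy$ for $j = 1, \ldots, n$, this reads $x_{ij} = p_i q_j$, so that $X = \mathbf{p} \mathbf{q}^T$ with $\mathbf{p} = (p_1, \ldots, p_m)^T$ and $\mathbf{q} = (q_1, \ldots, q_n)^T$. An outer product of two vectors has rank at most $1$, which establishes the claim.

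There is essentially no obstacle here; the result is a routine consequence of the separability of the integrand and linearity of integration, and the only mild subtlety is the slight notational mismatch in the statement (the integrand is written with $dx\,dy$ but the variable $x$ naturally pairs with $g_i$ over $[a,b]$ and $y$ with $h_j$ over $[c,d]$), which one resolves by reading the integration domains consistently with the definitions of $g_i$ and $h_j$. The genuine content of the lemma is not in its proof but in its later use: it is the engine that converts an approximation of an off-diagonal block of $\femsp$ as a short sum of integrals of separable functions into a bound on the numerical rank of that block, since each separable term contributes a rank-one matrix and the total rank is bounded by the number of terms.
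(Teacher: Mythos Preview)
Your proof is correct and follows essentially the same approach as the paper: both factor the double integral into a product of one-dimensional integrals and write $X$ as the outer product of the resulting vectors. The only difference is cosmetic (you use $\mathbf{p},\mathbf{q}$ where the paper uses $G,H$), and your remark about the $dx\,dy$ ordering is a fair observation about the statement rather than a gap in the argument.
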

    	
    	\begin{proof}
    		For a fixed $i,j$, we can write
    		\[
    		x_{ij} = I(\Gamma_{ij}) = \int_{a}^{b} \int_{c}^{d} \Gamma_{ij}(x,y)\ dx \ dy
    		= \int_{a}^{b} g_i(x) \ dx \int_{c}^{d} h_j(y) \ dy.
    		\]
    		Then $X=GH^T$ where the column vectors $G,H$ have entries $G_i=\int_{a}^{b} g_i(x) \ dx$ for $i=1,\dots,m$ and $H_j=\int_{c}^{d} h_j(y) \ dy$ for $j=1,\dots,n$, respectively.
    	\end{proof}

	\begin{figure}
		\centering
		\begin{tikzpicture}
		\draw[<->] (-4.3,0) -- (4.3,0);
		\draw (-4,.1) -- (-4,-.1) node[anchor=north] {$-1$};
		\draw (4,.1) -- (4,-.1) node[anchor=north] {$1$};
		\draw (0,.1) -- (0,-.1) node[anchor=north] {$0$};
		
		\foreach \x in { -4, -3.65, ..., -.5, .55, .9, ..., 3 }
		{
			\draw[line width=0.5pt, gray] plot[smooth]
			coordinates{ (\x,0) (\x+.1,.1) (\x+.5,1)  (\x+.9,.1) (\x+1,0) };
		}
		
		% Draw the bigger ones
		\foreach \x in { -.5, -.15, 0.2 }
		{
			\draw[blue, line width = 1pt] plot[smooth] coordinates{ (\x,0) (\x+.1,.1) (\x+.5,1)  (\x+.9,.1) (\x+1,0) };
		}
		
		% The delta
		\draw[dashed] (.35,.1) -- (.35,-.7) node[anchor=north] {$x_0$};
		\end{tikzpicture}
		\caption{A pictorial representation of a basis with $\delta$-overlapping $2$ for any small
			enough $\delta$.}
		\label{fig:overlapping}
	\end{figure}
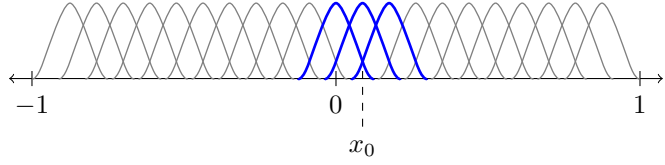
	
The only non separable part of \eqref{eq:fe-int} is
the function
	$g(x,y) = (x - y)^{- \alpha}$. A separable  approximation of  $g(x,y)$ on $[a,b]\times [c,d]$ with $a>d$ can be obtained by providing an approximation for $(x'+y')^{-\alpha}$ on $[a',b']^2$, where $a'=\frac{a-d}2$ and $b'=\max\{b-a,d-c\}+a'$, by means of the change of variables $x'=x-(a+d)/2$ and $y'=(a+d)/2- y$. Therefore, we
state the following result, whose proof follows the line of a
	similar statement for  $\alpha = 1$ in
	\cite{grasedyck2001singular}. Analogous estimates, for other kind of kernel functions can be found in \cite[Chapter 4]{hackbusch2015hierarchical}.
	
	\begin{lemma} \label{lem:alpha-separable}
		Let $g(x,y) = (x + y)^{-\alpha}$, with $\alpha > 0$,
		and consider
		the square $I^2$, with $I = [a, b]$ and $a > 0$. Then,
		for any $\epsilon > 0$, there exists a
		function $g_{\epsilon}(x, y)$ satisfying
		\begin{equation} \label{eq:gepsilon}
		| g(x,y) - g_\epsilon(x, y) | \leq | g(x,y) | \epsilon,
		\qquad
		x,y \in I,
		\end{equation}
		and $g_\epsilon(x,y)$ is the sum of at most $k_{\epsilon}$ separable functions where
		\begin{equation} \label{eq:kepsilon}
		k_{\epsilon} =  2\left\lceil\log_2 \left(\frac ba \right)\right\rceil \cdot
		\left(1 + \left\lceil \log_2\left(\frac{\alpha \cdot 4^\alpha}{\epsilon}\right) \right\rceil \right).
		\end{equation}
	\end{lemma}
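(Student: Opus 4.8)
The plan is to approximate $g$ by a piecewise one‑variable Taylor expansion carried out on a geometric (dyadic) decomposition of $I^2$ into $2\lceil\log_2(b/a)\rceil$ rectangles, on each of which the expansion converges geometrically. The starting observation is that, since $a>0$, the kernel $(x,y)\mapsto(x+y)^{-\alpha}$ has no singularity on $I^2$: as an analytic function its only singularity sits at $x+y=0$, i.e. at the corner of the positive quadrant, and the distance of $I^2$ from that point governs the convergence rate. Writing $(\alpha)_k=\alpha(\alpha+1)\cdots(\alpha+k-1)$ one has $\partial_y^k g(x,y)=(-1)^k(\alpha)_k(x+y)^{-\alpha-k}$, so each term of a Taylor expansion of $g$ in $y$ (or, symmetrically, in $x$) about a center $y_0$ has the form $c_k\,(x+y_0)^{-\alpha-k}(y-y_0)^k$, which is separable, being a product of a function of $x$ and a function of $y$. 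Truncating after $p$ terms therefore yields a rank‑$p$ separable approximation on any rectangle where the expansion is employed.

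Next I would construct the decomposition. Consider the nested squares $S_\ell=[a,2^\ell a]^2$ and the L‑shaped regions $L_\ell=S_{\ell+1}\setminus S_\ell$ for $\ell=0,\dots,m-1$ with $m=\lceil\log_2(b/a)\rceil$, so that $\bigcup_\ell L_\ell\supseteq I^2$. Each $L_\ell$ splits into exactly two rectangles, a horizontal and a vertical arm; on the horizontal arm I would expand in $x$ and on the vertical arm in $y$, always along the shorter side. A direct estimate shows that on each arm the relevant ratio $\lvert y-y_0\rvert/(x+\eta)$ (respectively $\lvert x-x_0\rvert/(\xi+y)$) is at most $1/2$ uniformly, because the short side length is at most half the distance $x+y$ to the singularity; e.g. on the vertical arm of $L_\ell$ one gets $\lvert y-y_0\rvert/(x+\eta)\le 2^{\ell-1}a/(a+2^\ell a)\le 1/2$. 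This yields exactly $2m=2\lceil\log_2(b/a)\rceil$ admissible rectangles tiling $I^2$, which is the source of the factor $2\lceil\log_2(b/a)\rceil$ in \eqref{eq:kepsilon}.

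I would then control the remainder on a fixed arm by writing $g=(x+y_0)^{-\alpha}(1+z)^{-\alpha}$ with $z=(y-y_0)/(x+y_0)$ and $\lvert z\rvert\le 1/2$, so that the relative error of the $p$‑term truncation equals the tail $\sum_{k\ge p}\binom{-\alpha}{k}z^k$ divided by $(1+z)^{-\alpha}$. Bounding $\lvert\binom{-\alpha}{k}\rvert=(\alpha)_k/k!$ together with the amplitude ratio $\bigl((x+y)/(x+\eta)\bigr)^{\alpha}\le 4^{\alpha}$ (which follows from the crude inequality $x+y\le 4(x+\eta)$ valid on an arm), the relative error is at most $\alpha\,4^{\alpha}\,2^{-p}$. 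Imposing $\alpha\,4^{\alpha}\,2^{-p}\le\epsilon$ and solving for $p$ gives the per‑rectangle order $p=1+\lceil\log_2(\alpha\,4^{\alpha}/\epsilon)\rceil$, so that each truncated expansion satisfies the pointwise relative bound \eqref{eq:gepsilon} on its rectangle, and hence on all of $I^2$ by the tiling. This bookkeeping — absorbing the Pochhammer prefactor $(\alpha)_k/k!$ and the amplitude factor cleanly into the single constant $\alpha\,4^{\alpha}$ while retaining the geometric rate $2^{-p}$ — is the step I expect to be the main obstacle.

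Finally I would assemble the global approximant. Since each arm is a product rectangle $X_t\times Y_t$, its indicator factors as $\mathbf 1_{x\in X_t}\mathbf 1_{y\in Y_t}$; multiplying the local separable terms by these indicators turns the piecewise construction into a genuine sum of separable functions on $I^2$ without increasing the per‑term rank. Summing the $p$ terms over the $2m$ arms exhibits $g_\epsilon$ as a sum of at most $2\lceil\log_2(b/a)\rceil\cdot\bigl(1+\lceil\log_2(\alpha\,4^{\alpha}/\epsilon)\rceil\bigr)=k_\epsilon$ separable functions obeying \eqref{eq:gepsilon}, which is the claim. The geometric L‑shaped decomposition and the separability of the glued terms are routine once the admissibility ratio $1/2$ is in hand; this follows the line of the $\alpha=1$ argument in \cite{grasedyck2001singular}, the only genuinely new ingredient being the $\alpha$‑dependent remainder constant derived above.
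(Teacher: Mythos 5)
Your tiling and gluing steps are sound and essentially reproduce the paper's construction (a dyadic decomposition refining toward the corner $(a,a)$ nearest the singular line $x+y=0$, giving $2\left\lceil\log_2(b/a)\right\rceil$ rectangles, a one-variable Taylor expansion on each, and indicator functions to assemble a global separable sum). The gap is exactly in the step you flagged as the main obstacle, and it is a genuine one: with admissibility ratio $|z|\le \frac12$, the bound you claim for the tail, $\bigl|\sum_{k\ge p}\binom{-\alpha}{k}z^k\bigr|\le \alpha\,2^{-p}$ (up to the constant amplitude factor $4^\alpha$), is false whenever $\alpha>1$ --- which is precisely the relevant regime, since the lemma is invoked with exponent $\alpha+1\in(2,3)$. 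Indeed $\bigl|\binom{-\alpha}{k}\bigr|=\frac{(\alpha)_k}{k!}=\frac{\Gamma(k+\alpha)}{\Gamma(\alpha)\Gamma(k+1)}\sim \frac{k^{\alpha-1}}{\Gamma(\alpha)}$ grows polynomially in $k$, and at $z=-\frac12$ every term of the tail is positive, so the tail is at least $\frac{(\alpha)_p}{p!}2^{-p}\sim\frac{p^{\alpha-1}}{\Gamma(\alpha)}2^{-p}$; no $p$-independent constant times $2^{-p}$ can dominate it. Consequently, with your choice $p=1+\left\lceil\log_2(\alpha 4^{\alpha}/\epsilon)\right\rceil$ the relative error carries an uncontrolled extra factor of order $\bigl(\log_2(1/\epsilon)\bigr)^{\alpha-1}$, and \eqref{eq:gepsilon} with the stated $k_\epsilon$ in \eqref{eq:kepsilon} is not established.

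The paper closes exactly this hole by working with a strictly better ratio and spending the slack on the polynomial growth of the coefficients. Its partition guarantees $|x-c_i|\le\frac12\mathrm{diam}(I_i)$ while $c_i+y\ge c_i\ge\frac32\mathrm{diam}(I_i)$, hence ratio $\frac13$; the tail $\sum_{j\ge N+1}\frac{\Gamma(\alpha+j)}{\Gamma(j+1)}3^{-j}$ is then rewritten as the $\lfloor\alpha\rfloor$-th derivative of $x^{N+1+\lfloor\alpha\rfloor}/(1-x)$ at $x=\frac13$ and bounded by the Cauchy estimate $|f^{(k)}(w)|\le k!\max_{|z-w|=r}|f(z)|\,r^{-k}$ with $r=\frac16$, giving $3^{\lfloor\alpha\rfloor}\lfloor\alpha\rfloor!\,2^{-N}$; combined with the amplitude factor $(4/3)^{\alpha}$ this produces exactly the constant $\alpha\,4^{\alpha}$ of \eqref{eq:kepsilon}. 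Note that your own geometry already supplies the better ratio: measured from the expansion center (which is the quantity that actually enters the Taylor remainder), your arms satisfy $|y-y_0|/(x+y_0)\le 2^{\ell-1}a/\bigl(a+\tfrac32\,2^{\ell}a\bigr)<\tfrac13$, not merely $\le\tfrac12$. So your construction is salvageable essentially verbatim, provided you replace the one-line tail bound by an argument of this type --- the Cauchy estimate above, or any explicit version of the fact that $\sup_k \frac{(\alpha)_k}{k!}\left(\frac23\right)^k<\infty$, so that $\sum_{k\ge p}\frac{(\alpha)_k}{k!}3^{-k}\le C_\alpha 2^{-p}$ with an explicit $C_\alpha$.
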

	
	\begin{proof}
		Consider the partitioning of the interval $I$ given by
		$I = I_0 \cup \ldots \cup I_K$ where
		\[
		I_j = \begin{cases}
		[ a + 2^{-j-1} \Delta, a + 2^{-j} \Delta ] & 0 \leq j < K \\
		[ a, a + 2^{-K} \Delta ] & j = K \\
		\end{cases}, \qquad
		\Delta := b - a, \qquad
		K = \left\lceil \log_2 \left(\frac ba\right) \right\rceil,
		\]
		and denote by $c_j$ the midpoint of $I_j$.
		The partitioning can be pictorially described as follows
		for $K = 4$.
		\[
		\begin{tikzpicture}
		\draw (0,0) -- (10,0);
		\draw (1,-0.1) -- (1,0.1) node[anchor=south] {$a$};
		\draw (9,-0.1) -- (9,0.1) node[anchor=south] {$b$};		
		\foreach \j in {0, ..., 3} {
			\pgfmathparse{pow(2.0, -\j)};
			\xdef\mydelta{\pgfmathresult}
			\draw (1+4 * \mydelta, -0.1) -- (1+4 * \mydelta, 0.1);
			\node at (1 + 4 * \mydelta * 1.5, -.3) {$I_\j$};
		}
		\node at (1.25, -.3) {$I_4$};
		\end{tikzpicture}
		\]
		Notice that the choice of $K$ implies that $\mathrm{diam}(I_K) \leq a$. In general, the left endpoint of each sub-interval is greater than the diameter, i.e., $\mathrm{diam}(I_j)\leq a + 2^{-j-1}\Delta$. This provides the inequality $c_j\geq \frac 32\mathrm{diam}(I_j)$  for every $I_j$.
		
		Starting from the subdivision of $I$ we get the following partitioning of $I^2$:
		\[
		\begin{tikzpicture}[scale=1.2]
		\draw (0,0) rectangle (4,4);
		\draw (0,2) -- (4,2); \node at (3,3) {$I_0 \times I_0$};
		\draw (2,0) -- (2,4); \node at (3,1) {$I_0 \times \hat I_0$};
		\draw (1,0) -- (1,2); \node at (1,3) {$\hat I_0 \times I_0$};
		\draw (0,1) -- (2,1); \node at (1.5,1.5) {$I_1 \times I_1$};
		\draw (.5,0) -- (.5,1); \node at (1.5,.5) {$I_1 \times \hat I_1$};
		\draw (0,.5) -- (1,.5); \node at (.5,1.5) {$\hat I_1 \times I_1$};
		\end{tikzpicture},
		\]
		where $\hat I_i = \bigcup_{j = i+1}^K I_j$.
		For any of the domains, we can consider the Taylor expansions
		of $g(x,y)$ either in the variable $x$ or in the variable $y$, expanded at
		the point $c_i$.
		\[
		g_{x,i}(x,y) := \sum_{j \geq 0} \frac{1}{j!} \frac{\partial^j}{\partial x^j} g(c_i,y) (x - c_i)^j, \qquad
		g_{y,i}(x,y) :=
		\sum_{j \geq 0} \frac{1}{j!}
		\frac{\partial^j}{\partial y^j} g(x, c_i) (y - c_i)^j.
		\]
		Using the fact that $\frac{\partial^j}{\partial x^j} g(c_i, y) =
		\frac{\Gamma(\alpha+j)}{\Gamma(\alpha)}(c_i + y)^{-\alpha-j}$
		(and similarly in the $y$ variable) we can rephrase
		the above expansion as follows:
		\[
		g_{x,i}(x,y) = \sum_{j \geq 0} \frac{\Gamma(\alpha+j)}{\Gamma(j+1) \Gamma(\alpha)}
		\left(\frac{x - c_i}{c_i + y}\right)^j
		\underbrace{(c_i + y)^{-\alpha}}_{g(c_i, y)}
		\]
		and similarly for $g_{y,i}(x,y)$.
		We now approximate $g(x,y)$ using truncations of the above
		expansions on each of the sets in the partitioning of the
		square. Consider the sets of the form $I_i \times I_i$
		or $I_i \times \hat I_i$. We define
		\[
		g_{N,x,i}(x, y) = \sum_{j = 0}^N \frac{\Gamma(\alpha+j)}{\Gamma(j+1) \Gamma(\alpha)}
		\left(\frac{x - c_i}{c_i + y}\right)^j (c_i + y)^{-\alpha}.
		\]
		
		Observe that since $c_i$ is the midpoint of $I_i$,
		$|x - c_i| \leq \frac 12 \mathrm{diam}(I_i)$.
		In addition, since both $c_i$ and $y$ are positive we
		have
		\[
		|c_i + y| = c_i + y \geq c_i 
		\geq \frac 32 \mathrm{diam}(I_i).
		\]
		Therefore, we have
		$|x - c_i| \leq \frac 13 (c_i + y)$, so we can bound
		\[
		|g(x,y) - g_{N,x,i}(x,y)| \leq \frac{|g(c_i,y)|}{\Gamma(\alpha)} \sum_{j \geq N + 1} \frac{\Gamma(\alpha+j)}{\Gamma(j+1)}
		\left(\frac 13 \right)^j .
		\]
		One can easily check that $\frac{\Gamma(\alpha+j)}{\Gamma(j+1)}
		\leq \frac{(\lfloor \alpha \rfloor + j)!}{j!}$, hence
		 we can write
		\begin{align*}
		\sum_{j = N+1}^\infty \frac{\Gamma(\alpha+j)}{\Gamma(j+1)}
		\left( \frac 13\right)^j  &\leq
		\sum_{j = N+1}^\infty \frac{(\lfloor \alpha \rfloor + j)!}{j!}
		\frac{1}{3^j} =
		\left( \sum_{j = N+1}^\infty  \frac{(\lfloor \alpha \rfloor + j)!}{j!}
		x^j \right)\Bigg|_{x = \frac 13} \\
		&= \left( \frac{d^{\lfloor \alpha \rfloor}}{dx^{\lfloor \alpha \rfloor}} \sum_{j = N+1}^\infty
		x^{\lfloor \alpha \rfloor+j} \right)\Bigg|_{x = \frac 13}
		= \left( \frac{d^{\lfloor \alpha \rfloor}}{dx^{\lfloor \alpha \rfloor}}
		\frac{x^{N+1+\lfloor \alpha \rfloor}}{1 - x} \right)\Bigg|_{x = \frac 13}.
		\end{align*}
		
		The last quantity can be bounded using the relation
		$|f^{(k)}(w)| \leq k! \max_{|z - w| = r}  |f(z)| \cdot r^{-k}$, with $r$ being a positive number such that
		$f(w)$ is analytic for $|z - w| \le r$. Note that $|f(z)|$
		assumes the maximum at the rightmost point of the circle $|z - w| = r$, since
		there we have both the maximum of the numerator and the minimum of
		the denominator. Choosing
		$r = \frac 16$ provides
		\[
		\left( \frac{d^{\lfloor \alpha \rfloor}}{dx^{\lfloor \alpha \rfloor}}
		\frac{x^{N+1+\lfloor \alpha \rfloor}}{1 - x} \right)\Bigg|_{x = \frac 13} \leq 3^{\lfloor \alpha \rfloor} \cdot \lfloor \alpha \rfloor!
		\cdot 2^{-N}.
		\]
		Plugging this back into our bound yields
		\[
		|g(x,y) - g_{N,x,i}(x,y)| \leq \frac{3^{\lfloor \alpha \rfloor} \cdot \lfloor \alpha \rfloor!
			\cdot 2^{-N}}{\Gamma(\alpha)} \cdot
		|g(c_i, y)|.
		\]
		Moreover, using $|x - c_i| \leq \frac 13 (c_i + y)$, we have that \begin{align*}
		g(x,y)=(x-c_i + c_i+y)^{-\alpha}\ge \left(\frac{c_i+y}3+c_i+y\right)^{-\alpha}= \left(\frac 34\right)^\alpha(c_i+y)^{-\alpha}= \left(\frac 34\right)^\alpha g(c_i,y)	
		\end{align*}
		for any $x \in I_i$. Therefore,
		\begin{equation} \label{eq:separable-alpha}
		|g(x,y) - g_{N,x,i}(x,y)| \leq
		\left(\frac 43\right)^\alpha\frac{3^{\lfloor \alpha \rfloor} \cdot \lfloor \alpha \rfloor!
			\cdot 2^{-N}}{\Gamma(\alpha)} \cdot
		|g(x, y)|, \quad
		(x,y) \in (I_i \times I_i) \cup (I_i \times \hat I_i).
		\end{equation}
		We can obtain an analogous result
		for the sets of the form $\hat I_i \times I_i$
		by considering the expansion $g_{N,y,i}(x,y)$.
		We define an approximant to $g(x,y)$ on $I^2$ by
		combining all the ones on the partitioning:
		\[
		g_N(x,y) := \begin{cases}
		g_{N,x,i}(x,y) & \text{on } I_i \times I_i \text{ and }
		I_i \times \hat I_i\\
		g_{N,y,j}(x,y) & \text{on } \hat I_i \times I_i
		\end{cases}.
		\]
	
		The function $g_N(x, y)$ is obtained summing $2K + 1$
                separable functions of order $N + 1$, which in turn implies that
                $g_N(x,y)$ can be written as a separable function of order $(2K + 1) \cdot (N + 1)$.
                Using that $2K+1 \leq 2 \lceil\log_2(\frac ba)\rceil$,
                we have that $g_N(x,y)$ is a $2\lceil \log_2(\frac ba) \rceil \cdot (N+1)$-separable
                approximant
		of $g(x,y)$ on $I^2$. We determine $N$ such that
		$
		\left(\frac 43\right)^\alpha\frac{3^{\lfloor \alpha \rfloor} \cdot \lfloor \alpha \rfloor!
			\cdot 2^{-N}}{\Gamma(\alpha)} \leq \epsilon
		$.
		Noting that $3^{\lfloor \alpha \rfloor - \alpha} \leq 1$,
		$\lfloor \alpha \rfloor ! / \Gamma(\alpha) \leq \alpha$,
		and setting
		$k_{\epsilon} =  2\lceil \log_2\left(\frac ba\right) \rceil \cdot (N+1)$
		we retrieve \eqref{eq:gepsilon}.
	\end{proof}
	
	\begin{remark}
		We note that the result of the previous lemma is slightly suboptimal, since
		we have chosen the fixed value of $r = \frac 16$ --- whereas the optimal one
		would be
		$r = \arg \min_{\rho} \rho^{-\lfloor \alpha \rfloor} \cdot \max_{|z - \frac 13|=\rho}
		  |f(z)|$. The latter leads to bounding the tail of the Taylor expansion
		with a quantity decaying as $\mathcal O(N^{\lfloor \alpha \rfloor} 3^{-N})$.
		The advantage of our formulation is that allows to
		explicitly bound $k_{\epsilon}$ with a multiple of $\log_2(\epsilon^{-1})$.
	\end{remark}
Lemma~\ref{lem:alpha-separable} enables to study the rank of the off-diagonal blocks in $\femsp$. Here we consider the constant coefficient case;
the generalization to variable coefficients requires little changes
as outlined in Remark~\ref{rem:nonconstantidiffusion}.
	
	\begin{theorem} \label{thm:structure-riesz-fem}
		Let $\femsp\in\mathbb C^{N\times N}$ be the matrix defined in \eqref{eq:fe-int} with $d_+(x) \equiv 1$.
		Assume that $\mathcal
		B$ has $\delta$-overlapping $k$ with $\delta>0$, and that
		the basis functions $\varphi_i(x) \geq 0$ are normalized to have
		$\int_{L}^R \varphi_i(x)\ dx = 1$.  Then
		\[ \
		\mathrm{qsrank}_\epsilon(\femsp)\le k + k_\epsilon = k +
		2\left\lceil\log_2 \left(\frac{R-L}{\delta} \right)\right\rceil \cdot
		\left(1 + \left\lceil \log_2 \left(\frac{(\alpha+1) \cdot 4^{\alpha+1}}{\epsilon}\right) \right\rceil \right).
		\]
	\end{theorem}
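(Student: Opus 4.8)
The plan is to control $\mathrm{qsrank}_\epsilon(\femsp)$ by bounding the $\epsilon$-rank of every off-diagonal block. Since a perturbation of a block restricts to a perturbation of each of its submatrices of no larger norm, the $\epsilon$-rank of a submatrix never exceeds that of a block containing it; and $\norm{Y}\le\norm{\femsp}$ for every submatrix $Y$. It therefore suffices to treat the maximal strictly lower blocks $Y=\femsp(\{m+1,\dots,N\},\{1,\dots,m\})$ for $m=1,\dots,N-1$, together with their strictly upper counterparts. First I would use the $\delta$-overlapping $k$ property of Definition~\ref{def:overlap} to locate a point $x_0$ separating the column basis functions from the row basis functions: removing the at most $k$ functions whose supports straddle $x_0$ isolates a correction of rank at most $k$, and leaves a cleanly separated sub-block $\hat Y$ whose rows $i$ satisfy $\supp(\varphi_i)\subset[x_0+\delta,R]$ and whose columns $j$ satisfy $\supp(\varphi_j)\subset[L,x_0-\delta]$. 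Writing $Y$ as this rank-$\le k$ piece plus $\hat Y$ and perturbing only $\hat Y$, the claim reduces to showing $\mathrm{rank}_\epsilon(\hat Y)\le k_\epsilon$.

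On $\hat Y$ the kernel singularity is avoided, and this is exactly what makes the analysis work. Indeed, for $x\in\supp(\varphi_i)$ and $y\in\supp(\varphi_j)$ we have $x-y\ge 2\delta>0$, the inner integral in \eqref{eq:fe-int} captures the whole support of $\varphi_j$, and $(x-y)^{1-\alpha}$ is smooth in $x$ there; hence I can differentiate twice under the integral sign, using $\partial_x^2(x-y)^{1-\alpha}=\alpha(\alpha-1)(x-y)^{-(\alpha+1)}$, to obtain
\[
\hat Y_{ij}=\frac{\alpha(\alpha-1)}{\Gamma(2-\alpha)}\int\!\!\int \varphi_i(x)\,\varphi_j(y)\,(x-y)^{-(\alpha+1)}\,dy\,dx .
\]
The only non-separable factor is now $g(x,y)=(x-y)^{-(\alpha+1)}$ on $[x_0+\delta,R]\times[L,x_0-\delta]$. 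Via the change of variables described before Lemma~\ref{lem:alpha-separable} this becomes $(x'+y')^{-(\alpha+1)}$ on a square $[a',b']^2$ with $a'=\delta$ and $b'=\max\{R-x_0,x_0-L\}\le R-L$, so that $b'/a'\le(R-L)/\delta$. Applying Lemma~\ref{lem:alpha-separable} with exponent $\alpha+1$ then produces a separable approximant $g_\epsilon=\sum_{\ell=1}^{k_\epsilon}G_\ell(x)H_\ell(y)$ with $|g-g_\epsilon|\le\epsilon\,|g|$ and with exactly the stated value of $k_\epsilon$.

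Substituting $g_\epsilon$ for $g$ defines $\tilde Y$; by Lemma~\ref{lem:separable-integral} each summand $\int\varphi_iG_\ell\,dx\int\varphi_jH_\ell\,dy$ is a rank-one matrix, so $\mathrm{rank}(\tilde Y)\le k_\epsilon$. It remains to convert the pointwise kernel estimate into a spectral-norm bound, which is where the positivity hypotheses enter. Since $\varphi_i,\varphi_j\ge0$ and $g>0$ on the separated domain, the entries of $\hat Y$ are nonnegative and
\[
|\hat Y_{ij}-\tilde Y_{ij}|\le\frac{\alpha(\alpha-1)}{\Gamma(2-\alpha)}\int\!\!\int\varphi_i\varphi_j\,|g-g_\epsilon|\le\epsilon\,\hat Y_{ij}.
\]
Because $\hat Y\ge0$ entrywise, the entrywise bound $|E|\le\epsilon\hat Y$ for $E=\hat Y-\tilde Y$ gives $\norm{E}\le\epsilon\norm{\hat Y}\le\epsilon\norm{\femsp}$ (using $|u|^{T}\hat Y|v|\le\norm{\hat Y}$ for unit vectors $u,v$), whence $\mathrm{rank}_\epsilon(\hat Y)\le k_\epsilon$ and therefore $\mathrm{rank}_\epsilon(Y)\le k+k_\epsilon$. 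The strictly upper blocks are easier still: after removing the $k$ straddling functions, a clean row $i$ has its support entirely to the left of a clean column $j$, so $\varphi_j(y)=0$ for every $y\le x\in\supp(\varphi_i)$, the inner integral in \eqref{eq:fe-int} vanishes, and the clean sub-block is identically zero.

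I expect the main obstacle to be the rigorous justification of differentiating twice under the integral sign and discarding the boundary terms: this step is legitimate only once the $\delta$-separation is in force, so the overlap splitting must be carried out \emph{before}, rather than after, rewriting the kernel. A secondary technical point is the passage from the entrywise relative estimate to the $\norm{\cdot}$ bound, which genuinely relies on the nonnegativity of both the basis functions and the kernel on the separated domain; without it one would only recover a Frobenius-type bound carrying an extra $\sqrt N$ factor.
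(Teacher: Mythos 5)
Your proposal is correct and takes essentially the same route as the paper's own proof: isolate a rank-$k$ correction from the straddling basis functions via the $\delta$-overlapping property, differentiate under the integral on the separated domain to expose the kernel $(x-y)^{-\alpha-1}$, apply Lemma~\ref{lem:alpha-separable} with exponent $\alpha+1$ on $[\delta,R-L]$ together with Lemma~\ref{lem:separable-integral}, and use the nonnegativity of the basis functions and of the kernel to promote the entrywise relative error to a spectral-norm bound. Your only additions --- the explicit reduction to maximal blocks and the observation that the cleanly separated upper-triangular blocks vanish identically --- are details the paper absorbs into a ``without loss of generality'', so they refine rather than change the argument.
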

	
	\begin{proof}
		Let $Y$ be any off-diagonal block of $\femsp$ which does not
		involve any entry of the central $2k+1$ diagonals. Without loss of
		generality we can assume $Y$ to be in the lower left corner of $\femsp$. In
		particular, there exist $h,\ell$ such that $\ell-h\ge k$ and
		\begin{equation} \label{eq:Yij}
		   Y_{ij}= \langle\varphi_{i+\ell},\frac{\partial^\alpha}{\partial^{RL}_+}\varphi_j\rangle,\qquad
		 i=1,\dots,N-\ell,\quad j=1,\dots h.
		\end{equation}
		Since we are considering a basis with $\delta$-overlapping $k$,
		we can identify $x_0$ such that the support of $\varphi_{i+\ell}$
		is always contained in $[x_0+\delta, R]$ and the one of
		$\varphi_j$ in $[L, x_0 - \delta]$. Therefore,
		expanding the scalar product we obtain
		\begin{align*} Y_{ij}&=\frac{1}{\Gamma(2-\alpha)}\int_L^R
		\varphi_{i+\ell}(x)\frac{\partial^2}{\partial
			x^2}\int_{L}^x \frac{\varphi_j(y)}{(x-y)^{\alpha-1}}dy\ dx\\ &=\frac{1}{\Gamma(2-\alpha)}
		\int_{x_0 + \delta}^{R}	\varphi_{i+\ell}(x)\frac{\partial^2}{\partial
			x^2}\int_{L}^{x_0 - \delta} \frac{\varphi_j(y)}{(x-y)^{\alpha-1}}dy\ dx\\
		&= \frac{1}{\Gamma(2-\alpha)}\int_{x_0 + \delta}^{R}
		\varphi_{i+\ell}(x)\int_{L}^{x_0 - \delta} \frac{\partial^2}{\partial
			x^2}\frac{\varphi_j(y)}{(x-y)^{\alpha-1}}dy\ dx\\
		&=\frac{1}{\Gamma(2-\alpha)}\int_{x_0+\delta}^R
		\varphi_{i+\ell}(x)\int_{L}^{x_0-\delta} \frac{\alpha(\alpha - 1)\varphi_j(y)}{(x-y)^{\alpha+1}}dy\
		dx\\
		&=\int_{x_0+\delta}^R\int_L^{x_0-\delta}\frac{\alpha (\alpha - 1)}{\Gamma(2-\alpha)}\frac{\varphi_{i+\ell}(x)\varphi_j(y)}{(x-y)^{\alpha+1}}dy\
		dx.
		\end{align*}
		By the  change of variable $\hat x = x - x_0$,
		and $\hat y = x_0 - y$, we can write
		\[
		\frac{1}{(x - y)^{\alpha+1}} = \frac{1}{(\hat x + \hat y)^{\alpha+1}}, \qquad
		\begin{cases}
		\hat x \in [\delta, R-x_0] \subseteq [\delta, R - L] \\
		\hat y \in [\delta, x_0-L] \subseteq [\delta, R - L]
		\end{cases}
		\]
		Applying Lemma~\ref{lem:alpha-separable} to the right-hand side,
		on the larger interval $[\delta, R - L]$
		we recover a separable approximation of $(\hat x + \hat y)^{-\alpha-1}$. Since the change of variable does not mix $x$ and $y$, this also gives
		the relatively accurate separable approximation:
		\[
		f_{ij}(x,y) := \frac{\alpha (\alpha - 1)}{\Gamma(2-\alpha)} \frac{\varphi_{i+\ell}(x)\varphi_j(y)}{(x-y)^{\alpha+1}} = s_{k_\epsilon}(x,y) + r_{k_\epsilon}(x,y),
		\]
		with $s_{k_\epsilon}(x,y)$ sum of $k_\epsilon$ separable functions,
		with $k_\epsilon$ as in \eqref{eq:kepsilon},
		and $|r_{k_\epsilon}(x,y)| \leq
		|f_{ij}(x,y)| \cdot \epsilon$. 	
		Therefore, we can decompose $Y$ as $Y = S + E$, with
		\[
		S_{ij} = \int_{x_0+\delta}^R\int_L^{x_0-\delta} s_{k_\epsilon}(x,y) dx \ dy
		\]
		and $E$ defined analogously using $r_{k_\epsilon}(x,y)$ in
		place of $s_{k_\epsilon}(x,y)$. Lemma~\ref{lem:separable-integral} tells us that the rank
		of $S$ is bounded by $k_\epsilon$. On the other hand,
		using that $f_{ij}(x,y) \geq 0$, we obtain
		\[
		|E_{ij}| =  \left|\int_{x_0+\delta}^R\int_L^{x_0-\delta} r_{k_\epsilon}(x,y) dx \ dy \right| \leq
		\left|\int_{x_0+\delta}^R\int_L^{x_0-\delta} f_{ij}(x,y)\ dx \ dy \right| \cdot \epsilon \leq
		|Y_{ij}| \cdot \epsilon.
		\]

		This implies that $\norm{E} \leq \norm{Y} \cdot \epsilon$,
		so
		$\mathrm{rank}_{\epsilon}(Y) \leq k_\epsilon$. Since we have
		excluded the central $2k+1$ diagonals from our analysis,
		and $\norm{Y} \leq \norm{\femsp}$, we conclude that  $\mathrm{qsrank}_{\epsilon}(\femsp) \leq k_\epsilon + k$.
	\end{proof}

	\begin{remark} \label{rem:nonconstantidiffusion}
		We notice that the proof of Theorem~\ref{thm:structure-riesz-fem} remains unchanged if in place of \eqref{eq:Yij} one considers
		$\langle \varphi_i, d_+(x,t) \frac{\partial^\alpha \varphi_j}{\partial^{RL}_+ x^\alpha} \rangle$, with
		a positive diffusion coefficient $d_+(x,t)$. This means
		that the rank structure in $\femsp$ is present also in the
		non-constant diffusion case.
		The analogous statement is true for $\femsm$.
	\end{remark}

	\subsection{Approximating the 1D fractional discretizations in practice}
	\label{sec:practice}
	
	We have shown in the previous sections that several discretizations
	of fractional derivative operators are well-approximated by
	quasiseparable matrices of low-rank. However, we have not
	yet shown how to efficiently represent and compute such matrices.
		In fact, to make large scale computations feasible, we need
	to reduce both the storage and computational complexity to
	at most linear polylogarithmic cost. To this aim, we introduce
	HODLR matrices.
	
	\subsubsection{Hierachically off-diagonal low-rank matrices}
	Off-diagonal rank structures are often present in the
	discretization of PDEs and integral equations; this can be exploited using Hierarchical matrices ($\mathcal H$-matrices)
	\cite{hackbusch1999sparse,borm,hackbusch2015hierarchical} and their variants HSS, $\mathcal
	H^2$-matrices. The choice of the representation is usually tailored to the rank structure of the operator.
	In this work we focus on hierarchically off-diagonal low-rank matrices (HODLR), which allow
	to store a $N \times N$ matrix of quasiseparable rank $k$
	with $\mathcal O(kN\log N)$ parameters, and to perform
	arithmetic operations (sum, multiplication, inversion)
	in linear-polylogarithmic time.
	
	A representation of a HODLR matrix $A$ is obtained by block
	partitioning it in $2 \times 2$ blocks as follows:
	\begin{equation} \label{eq:HODLRpart} A=
	\begin{bmatrix}
	A_{11}&A_{12}\\
	A_{21}&A_{22}
	\end{bmatrix}
	, \qquad
	A_{11}\in\mathbb C^{N_1\times N_1}, \quad
	A_{22}\in\mathbb C^{N_2\times N_2}
	\end{equation} where $N_1:=\lfloor \frac{N}{2} \rfloor $ and
	$N_2:=\lceil \frac{N}{2} \rceil$. The antidiagonal blocks
	$A_{12}$ and $A_{21}$ have rank at most $k$, and so can be
	efficiently stored as outer products, whereas the
	diagonal blocks $A_{11}$ and $A_{22}$ can be recursively
	represented as HODLR matrices, until we reach a minimal
	dimension. This recursive representation is
	shown in Figure~\ref{fig:hodlr}. In the case of numerically quasiseparable matrices, the off-diagonal blocks are compressed according to the norm of the matrix, i.e., we drop the components of their SVDs, whose magnitude is less than $\epsilon\norm{A}_2$. We will call \emph{truncation tolerance} the parameter $\epsilon$.
	
	\begin{figure}
		\centering
		\includegraphics[width=.8\linewidth]{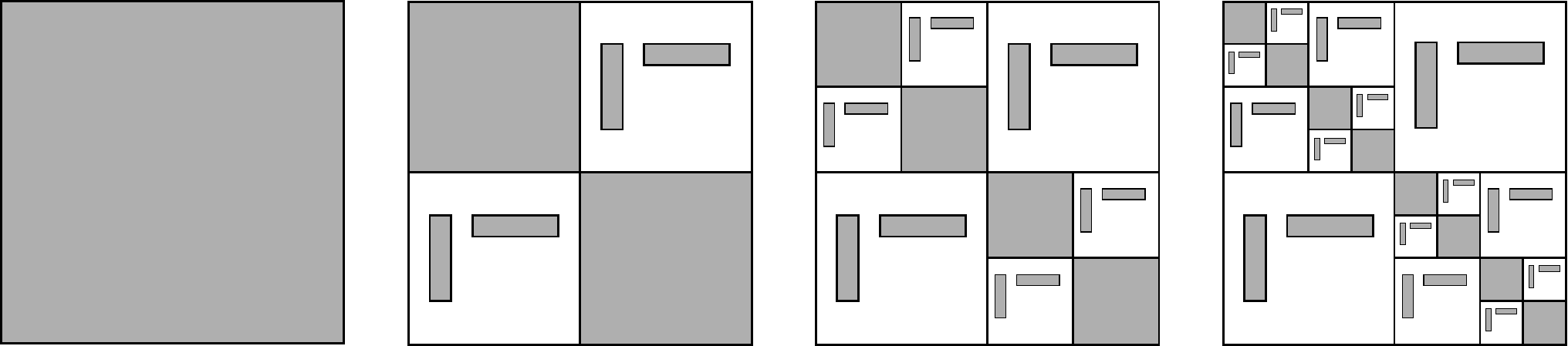}
		\caption{Pictorial representation of the HODLR structure.} \label{fig:hodlr}
	\end{figure}
	In order to efficiently construct the HODLR representation of a matrix, the
	crucial step is to compute a factorized approximation of the
	off-diagonal blocks. We look for procedures whose complexity
	does not exceed the cost of some arithmetic operations in the HODLR format, i.e., matrix-vector multiplication
	(which requires $\mathcal O(kN \log N)$ flops)
	and solving linear systems (requiring $\mathcal O(k^2 N \log^2N)$ flops).
	
	\subsubsection{HODLR representation for finite differences method}
	Our task  is to retrieve a HODLR representation
	of $\fdm = I+ \frac{\Delta t}{\Delta x^\alpha} (D_+^{(m)} T_{\alpha, N} +  D_-^{(m)} T_{\alpha, N}^T)$
	defined in~\eqref{system_FD}. It is easy to see that the tricky
	part is to compress $T_{\alpha,N}$; in fact, performing a diagonal
	scaling is equivalent to scale the left or right factor in the outer
	products, as well as the full diagonal blocks. Finally,
	the shift with the identity only affects the dense diagonal
	blocks.
	
	Assume we have partitioned $T_{\alpha,N}$ as in \eqref{eq:HODLRpart};
	observe that the block $A_{21}$ contains all the subdiagonal blocks
	at the lower levels,
	thanks to the Toeplitz structure. A similar statement holds
	for $A_{12}$.
	Therefore, once we compress $A_{21}$ and $A_{12}$ at
	the top-level, we can obtain all the needed representations
	just by restriction. Moreover, the compression of $A_{12}$ is
	particularly easy, since it has only one nonzero element.
	
	Thus, we have reduced the problem to finding a low-rank representation
	for a Toeplitz matrix (which we know to be numerically low-rank).
	We deal with this issue with the two-sided Lanczos method
	\cite{golub1981block,simon2000low}. This requires
	to perform a few matrix vector products, which can be
	performed, exploiting FFT, in $\mathcal O(kN \log N + Nk^2)$ time.
	
	We remark that the computation of the coefficients
	$g_k^{(\alpha)}$ can be carried out efficiently by recursion
	using the following formulas:
	\[
	g_0^{(\alpha)} = 1,\qquad
	g_1^{(\alpha)} = -\alpha, \qquad
	  g_{k+1}^{(\alpha)} = g_{k}^{(\alpha)} \cdot \left(\frac{k-\alpha}{k+1}\right).
	\]
	
	\subsubsection{HODLR representation for finite elements discretizations}\label{sec:hodlr-fe}
	The proof of Theorem~\ref{thm:structure-riesz-fem} combined
	with Lemma~\ref{lem:alpha-separable} directly provides
	a construction for the low-rank representations of the
	off-diagonal blocks in $\fems$ defined in \eqref{eq:fe-int}.
	At the heart of this machinery,
	it is required to sample the truncated Taylor
	expansions of $(x + y)^{-(\alpha+1)}$. Alternatively,
	separable approximations of this function on the domains
	corresponding to the off-diagonal blocks can be retrieved
	using \texttt{chebfun2} \cite{townsend2013extension}.
	
	However, often the choice of the basis consists
	of shifted copies of the same function on an equispaced
	grid of points. When this is the case, the discretization
	matrix turns out to have Toeplitz plus low-rank structure in the constant
	coefficient case.
    In this situation, the same approximation strategy
	used for finite differences can be applied. This happens in the problem presented in Section~\ref{sec:ex-fe} and considered originally in \cite{zhao}. The authors of the latter proposed to represent the stiffness matrix with a different Hierarchical format. In Figure~\ref{fig:partitioning} we compare the distribution of the off-diagonal ranks obtained with the two partitioning.
	Since there is not a dramatic difference between the ranks obtained with the two choices,
	our approach turns out to be preferable because it provides a lower storage consumption and
	a better complexity for computing the LU decomposition and solving the linear system
	\cite{hackbusch2015hierarchical}.
	
	\begin{figure}

\centering
\begin{tikzpicture}[x = 1.15cm, y = 1.15cm]
% HODLR partitioning
\node at (1,1)     {$14$};
\node at (3,3)     {$14$};
\node at (2.5,.5)  {$13$};
\node at (3.5,1.5) {$13$};
\node at (.5,2.5)  {$13$};
\node at (1.5,3.5) {$13$};
\node at (.25, 3.25){$13$};
\node at (.75, 3.75){$13$};
\node at (1.25, 2.25){$13$};
\node at (1.75, 2.75){$13$};
\node at (2.25, 1.25){$13$};
\node at (2.75, 1.75){$13$};
\node at (3.25, .25){$13$};
\node at (3.75, .75){$13$};
\draw (0,0) rectangle (4,4);
\foreach \i in {0, 2} {
	\draw (\i,4-\i) rectangle (\i+2,2-\i);
}
\foreach \i in {0,...,7} {
	\filldraw[lowrankcolor] (\i*.5,4-\i*.5) rectangle (\i*.5+.5,3.5- \i*.5);
}
\foreach \i in {0, ..., 3} {
	\draw (\i,4-\i) rectangle (\i+1,3-\i);
}

\end{tikzpicture}~~~~~~~~~~~~~\begin{tikzpicture}[x = 1.15cm, y = 1.15cm]
\node at (.5,.5)     {$9$};
\node at (1.5,.5)    {$9$};
\node at (.5,1.5)    {$9$};
\node at (3.5,2.5)   {$9$};
\node at (2.5,3.5)   {$9$};
\node at (3.5,3.5)   {$9$};
\node at (.25,2.75)  {$9$};
\node at (.25,2.25)  {$7$};
\node at (.75,2.25)  {$9$};
\node at (1.25,3.75)  {$9$};
\node at (1.75,3.75)  {$7$};
\node at (1.75,3.25)  {$9$};
\node at (1.25,1.75)  {$9$};
\node at (1.25,1.25)  {$7$};
\node at (1.75,1.25)  {$9$};
\node at (2.25,2.75)  {$9$};
\node at (2.75,2.75)  {$7$};
\node at (2.75,2.25)  {$9$};
\node at (2.25,0.75)  {$9$};
\node at (2.25,0.25)  {$7$};
\node at (2.75,0.25)  {$9$};
\node at (3.25,1.75)  {$9$};
\node at (3.75,1.75)  {$7$};
\node at (3.75,1.25)  {$9$};

\foreach \i in {0, 2} {
	\draw (\i,4-\i) rectangle (\i+2,2-\i);
}
\foreach \i in {0,2} {
	\draw (\i,\i+1) -- (\i+2,\i+1);
	\draw (\i+1,\i) -- (\i+1,\i+2);
}
\foreach \i in {0,0.5,...,3} {
	\fill[lowrankcolor] (\i,4-\i) rectangle (\i+1,3-\i);
}
\foreach \i in {0,0.5,...,2.5} {
	\draw (\i,2.5-\i) rectangle (\i+.5,3-\i);
	\draw (\i+1.5,4-\i) rectangle (\i+1,3.5-\i);
}
\draw (0,0) rectangle (4,4);	
\end{tikzpicture}
		\caption{Rank distribution in the HODLR partitioning (left) and in the geometrical clustering (right) for the stiffness matrix of Section~\ref{sec:ex-fe}. The size of the matrix is $N=4096$ and the truncation tolerance is $\epsilon=10^{-8}$.}
		\label{fig:partitioning}
	\end{figure}
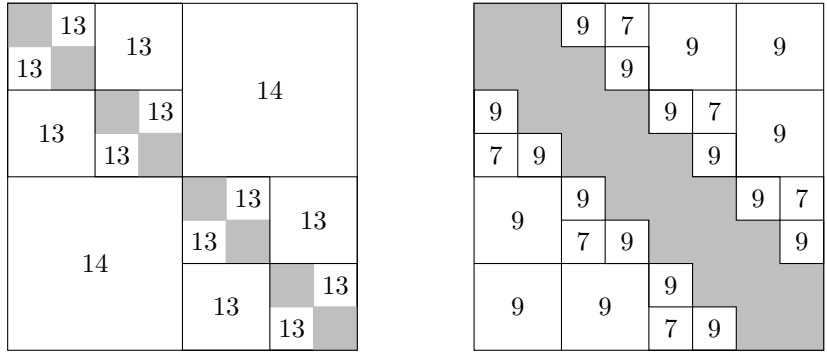
	
	\subsection{Solving the problem in the 1D case}\label{sec:solv-1D}
	As we have shown in the previous section, the 1D discretization
	of the differential equation
	\[
	  \frac{\partial u(x,t)}{\partial t} = d_+(x,t) \frac{\partial^\alpha u(x,t)}{\partial_+ x^\alpha} + d_-(x,t) \frac{\partial^\alpha u(x,t)}{\partial_- x^\alpha} + f(x,t),
	\]
	using either finite differences or finite elements, yields
	a coefficient matrix with HODLR structure, and we have described
	efficient procedures
	to compute its representation. As analyzed in
	\cite{Meer1}, discretizing the
	above equation in time, using implicit Euler,  yields
	\[
	  \frac{u(x,t_{m+1}) - u(x,t_m )}{\Delta t} =
	  d_{+}(x,t_{m+1})\frac{\partial^{\alpha} u(x,t_{m+1})}{\partial_+ x^{\alpha}} +d_{-}(x,t_{m+1})\frac{\partial^{\alpha} u(x,t_{m+1})}{\partial_- x^{\alpha}} +f(x,t_m)+\mathcal O(\Delta t).
	\]
	This leads to a sequence of linear systems
	$\mathcal A u^{(m)} = b^{(m)}$, where $u^{(m)}$
	contains either the samplings on the grid (for finite differences)
	or the coordinates in the chosen basis (for finite elements)
	of $u(x,t_m)$ and  $b^{(m)}$ depends on $f(x,t_m)$ and $u(x,t_{m-1})$.
	The matrix $\mathcal A$ is either $\fdm$ or $\fem$.
	
	To solve the resulting linear system, we rely on fast solvers
	for matrices in the HODLR format, as described in \cite{hackbusch2015hierarchical}.
	In more detail, we first compute a structured LU factorization of the coefficient matrix and then perform back substitution. The quasiseparable property ensures that the off-diagonal rank of the factors of the LU factorization does not differ from the one of the coefficient matrix $\mathcal A$. Computation of LU is the bottleneck and provides an overall complexity of $\mathcal O(k^2N\log^2N)$. Our implementation is based
	on the \texttt{hm-toolbox} \cite{hm-toolbox}.
\subsection{Numerical results for 1D finite differences}
In this section we compare the use of HODLR arithmetic  with a preconditioning technique, recently introduced in \cite{DMS}, for the treatment of certain 1D problems with non constant diffusion coefficients.

We consider the sequence of linear systems $\mathcal A u^{(m)}=b^{(m)}$ arising from the discretization of \eqref{fde_FD} with finite differences. In particular, $\mathcal A$ and $b^{(m)}$ are defined as in \eqref{system_FD}, where we have chosen $\alpha \in\{1.2,1.8\}$, $d_+(x,t)=\Gamma(3-\alpha)x^\alpha$ and $d_-(x,t)=\Gamma(3-\alpha)(2-x)^\alpha$. The spatial domain is $[L,R]=[0,2]$ and we set $\Delta x=\Delta t= \frac 1{N+2}$ for increasing values of $N$. The right-hand side $b^{(m)}\in\mathbb R^{N}$ is chosen as in \cite{DMS}.

In \cite[Example 1]{DMS}, the authors propose two tridiagonal structure preserving preconditioners, defined depending on the value of $\alpha$ and used for speeding up the convergence of GMRES. In particular, such preconditioners ($P_1$ and $P_2$ in the notation of \cite{DMS}) are obtained replacing the Toeplitz matrix $T_{\alpha,N}$ in \eqref{system_FD} with the central difference approximation of the first and the second derivative.

The truncation tolerance has been set to $10^{-8}$ and the dimension of the minimal blocks in the HODLR format is $256$.
The thresholds for the stopping criterion of the GMRES have been set to $10^{-7}$
and $10^{-6}$ in the cases with $\alpha = 1.2$ and $\alpha = 1.8$, respectively,
as this provided comparable accuracies with the HODLR solver.
We compare the time consumption of this method with the one proposed in Section~\ref{sec:solv-1D}, for solving one linear system.
The time for computing the LU factorization and for performing the back substitution are kept separate. In fact, for this example the diffusion coefficients do not depend on time, so in case of multiple time steps the LU can be computed only once at the beginning saving computing resources.

The results reported in Table~\ref{tab:1Da}-\ref{tab:1Db} refer to the performances of the two approaches for a fixed time step (the first one). With \emph{Res} we indicate the relative residue $\norm{\mathcal A x -b^{(m)}}_2/\norm{x}_2$, where $x$ is the computed solution, while \emph{its} denotes the number of iterations needed by the preconditioned GMRES to converge. We note that both strategies scale nicely with respect to the dimension. Once the LU is available, HODLR back substitution provides a significant saving of execution time with respect to preconditioned GMRES. In particular, at dimension $N=\num{131072}$ our approach is faster whenever we need to compute more than about $13$ time steps ($\alpha = 1.2$), or $25$ time steps ($\alpha = 1.8$). The column denoted by
$\text{qsrank}_\epsilon$ indicates the quasiseparable rank of the discretization matrices. 

This example highlights the convenience of the HODLR strategy when used for solving several linear systems endowed with the same coefficient matrix. As shown in the next section, such a benefit is particularly advantageous in the 2D setting when the HODLR format is used in combination with Krylov projection schemes for solving FDE problems rephrased as matrix equations.

 \begin{table}[t]
 	\centering 		
 	\small
 	\pgfplotstabletypeset[%
 	column type=l,
 	every head row/.style={
 		before row={
 			\toprule
 			& \multicolumn{1}{c|}{}
 			& \multicolumn{4}{c|}{PGMRES} & \multicolumn{2}{c}{HODLR}\\
 		},
 		after row = \midrule ,
 	}, 	
 	every last row/.style={after row=\bottomrule},		
 	sci zerofill,
 	columns={0,1,2,3,4,5,6,7},
 	columns/0/.style={dec sep align={c|},column type/.add={}{|},column name=$N$,fixed},
 	columns/1/.style={column name=Time},
 	columns/2/.style={column name=its},
 	columns/3/.style={dec sep align={c|},column type/.add={}{|},column name=$Res$},
 	columns/4/.style={column name=Time},
 	columns/5/.style={column name=Res},
 	columns/6/.style={column name=Time ($LU$)},
 	columns/7/.style={column name=$\text{qsrank}_\epsilon$}
 	]{e1.dat}
 	\caption{Performances of the preconditioned GMRES and of the HODLR solver in the case $\alpha = 1.8$}
 	\label{tab:1Da}
 \end{table}
 \begin{table}[t]
 	\centering 		
 	\small
 	\pgfplotstabletypeset[%
 	column type=l,
 	every head row/.style={
 		before row={
 			\toprule
 			& \multicolumn{1}{c|}{}
 			& \multicolumn{4}{c|}{ PGMRES} & \multicolumn{2}{c}{ HODLR}\\
 		},
 		after row = \midrule ,
 	}, 	
 	every last row/.style={after row=\bottomrule},		
 	sci zerofill,
 	columns={0,1,2,3,4,5,6,7},
 	columns/0/.style={column name=$N$,fixed,dec sep align={c|},column type/.add={}{|}},
 	columns/1/.style={column name=Time},
 	columns/2/.style={column name=its},
 	columns/3/.style={dec sep align={c|},column type/.add={}{|},column name=Res},
 	columns/4/.style={column name=Time},
 	columns/5/.style={column name=Res},
 	columns/6/.style={column name=Time ($LU$)},
 	columns/7/.style={column name=$\text{qsrank}_\epsilon$}
 	]{e2.dat}
 	\caption{Performances of the preconditioned GMRES and of the HODLR solver in the case $\alpha = 1.2$}
 	\label{tab:1Db}
 \end{table}

	\section{Spatial 2D problems with piece-wise smooth right-hand side}\label{sec:matrix-eq}
	
	We now describe how to efficiently
	solve 2D fractional differential equations
	leveraging the rank properties that we have identified in the
	1D discretizations.
	
	More precisely, we are interested in solving the
	equation
	\begin{align}\label{eq:2dmatrix}
	  \frac{\partial u}{\partial t} &= d_{1,+}(x,t)\frac{\partial^{\alpha_1} u}{\partial_+ x^{\alpha_1}} +d_{1,-}(x,t)\frac{\partial^{\alpha_1} u}{\partial_- x^{\alpha_1}} + d_{2,+}(y,t)\frac{\partial^{\alpha_2} u}{\partial_+ y^{\alpha_2}} +d_{2,-}(y,t)\frac{\partial^{\alpha_2} u}{\partial_- y^{\alpha_2}}+f,
	\end{align}
	where $(x,y) \in [a, b] \times [c, d]$, $t\geq 0$
	and imposing absorbing boundary conditions. 
	
	 We discretize \eqref{eq:2dmatrix}
	in the time variable using implicit Euler, and we obtain
	\begin{align} \label{eq:2dfde}
	\begin{split}
	  \frac{u(x,y,t_{m+1}) - u(x,y,t_m )}{\Delta t} &=
	    d_{1,+}(x,t_{m+1})\frac{\partial^{\alpha_1} u(x,y,t_{m+1})}{\partial_+ x^{\alpha_1}} +d_{1,-}(x,t_{m+1})\frac{\partial^{\alpha_1} u(x,y,t_{m+1})}{\partial_- x^{\alpha_1}} \\ &+ d_{2,+}(y,t_{m+1})\frac{\partial^{\alpha_2} u(x,y,t_{m+1})}{\partial_+ y^{\alpha_2}} +d_{2,-}(y,_{m+1})\frac{\partial^{\alpha_2} u(x,y,t_{m+1})}{\partial_- y^{\alpha_2}} \\ &+f(x,y,t_{m+1})
	    + \mathcal O(\Delta t).
	\end{split}
	\end{align}
Then, we discretize the space derivative
	by considering a tensorized form of a 1D discretization. In
	the finite differences case, we consider a grid of nodes
	obtained as the product of equispaced points on $[a,b]$ and
	$[c,d]$, respectively. In the finite elements case, we
	assume that the basis is formed using products of basis
	elements in the coordinates $x$ and $y$.
This leads to linear systems of the form
\[
\mathcal A \ \vect(U^{(m+1)})=\vect(U^{(m)})+\vect(F^{(m+1)}),
\]
	where we used the operator $\vect(\cdot)$ because, as we will discuss later, it is useful to reshape these objects into matrices conformally to the discretization grid.
	
	\begin{remark}\label{rem:diff}
		In the formulation of \eqref{eq:2dmatrix}, the diffusion coefficients multiplying the differential operator only
		depend on time and on the variable involved in the differentiation. This is
		not by accident, since it makes it faster to rephrase the
		problem in matrix equation form as we will do
		in Section~\ref{sec:rephrasing-matrix-equations}.
		Indeed, these assumptions
		are also present in \cite{Breiten2014}, where the connection
		with matrix equations has been introduced for fractional
		differential equations.
	\end{remark}
	
	\subsection{Regularity and rank structure in the right-hand side}
	When the source term $f(x,y,t)$ is smooth in the spatial variables
	at every time step, the matrix $F^{(m+1)}$ turns out to be
	numerically low-rank. This can be justified in several ways.
	For instance, one
	can consider the truncated expansion of
	$f(x,y,t_{m+1})$ in any of the
	spatial variables, similarly to what is done in the proof of
	Lemma~\ref{lem:alpha-separable}. This provides a separable approximation
	of the function, which corresponds to a low-rank approximation
	of $F^{(m+1)}$. Another interesting point of view
	relies on introducing SVD for bivariate functions
	\cite{townsend2014computing}.
	
	In practice, one can recover a low-rank approximation of $F^{(m+1)}$
	by performing a bivariate polynomial expansion, for instance using
	\texttt{chebfun2} \cite{townsend2013extension}. For elliptic differential
	equations, the solution often inherits the same regularity of the right-hand side,
	and therefore the same low-rank
	properties are found in $U^{(m)}$ at every timestep.	
	Once low-rank representations of $F^{(m+1)}$ and
	$U^{(m)}$ are available, it is advisable to recompress
	their sum.
	If the latter has rank $k \ll N$,
	this can be performed in $\mathcal O(Nk^2)$ flops, relying
	on reduced QR factorizations of the outer factors, followed
	by an SVD of a $k \times k$ matrix (see
	the recompression procedure in \cite[Algorithm 2.17]{hackbusch2015hierarchical}).
	
	The same machinery applies when $f$ is piecewise smooth, by decomposing it
	into a short sum $f_1+\dots+f_s$, where each $f_j$ is smooth in a box that contains its support.

	\subsection{Linear systems as matrix equations}
	\label{sec:rephrasing-matrix-equations}
We consider equations as in \eqref{eq:2dmatrix}. Note that the
two differential operators in $x$ and $y$ act independently on
the two variables. Because of this and exploiting the assumption on the diffusion coefficients highlighted in Remark \ref{rem:diff}, the matrix $\mathcal A$ can be written
either as 
\[
I\otimes\left(\frac{1}{2}I-\Delta t \fds[\alpha_1] \right)+  \left(\frac{1}{2}I-\Delta t\fds[\alpha_2] \right)\otimes I, 
\]
in the finite difference case, or as 
\[
  M\otimes\left( \frac{1}{2} M-\Delta t \fems[\alpha_1] \right)+ 
    \left(\frac{1}{2}M-\Delta t  \fems[\alpha_2] \right)\otimes M, 
\]
in the finite element case, where $M$ is the 1D mass matrix. 
Using the well-known relation $\vect(AXB)=(B^T\otimes A)\vect(X)$ we
		get the Sylvester equation
		\begin{equation} \label{eq:sylv-timestepping}
		\left(\frac 12 I -\Delta t\fds[\alpha_1]\right)
		U^{(m+1)} + U^{(m+1)} \left(\frac 12 I -\Delta t\fds[\alpha_2]\right)^T = F^{(m+1)} + U^{(m)},
		\end{equation}
		for the finite difference case,
		where $U^{(m)}$ is the solution approximated at the time step
		$m$, and $F^{(m)}$ contains the sampling of the function $f(x,y,t_{m+1})$ on the grid. In the case of finite elements, instead, one obtains the generalized Sylvester equation
		\begin{equation} \label{eq:sylv-timestepping-fe}
	\left(\frac 12 M -\Delta t\fems[\alpha_1]\right)
	U^{(m+1)}M + MU^{(m+1)} \left(\frac 12 M -\Delta t\fems[\alpha_2]\right)^T = M F^{(m+1)}M + MU^{(m)}M.
		\end{equation}
		We can obtain the same structure of 
        \eqref{eq:sylv-timestepping} by inverting $M$, if it is well-conditioned, or treat the problem directly considering the pencils $\frac 12 M -\Delta t\fems[\alpha_1] - \lambda M$
        and $\frac 12 M -\Delta t\fems[\alpha_2] - \lambda M$ \cite{simoncini2016computational}. In the experiment of Section~\ref{sec:ex-fe}, we rely
        on the first approach. 
		
	In light of the properties of $F^{(m+1)}$ and $U^{(m)}$, we have reformulated a space discretization of \eqref{eq:2dfde}
as a matrix equation of the form
	\[
	  AX + XB = UV^T, \qquad
	  U,V \in \mathbb C^{N \times k},
	\]
	where $A$ and $B$ are square and $k\ll N$. From now on, we assume the spectra of $A$ and $-B$ to be separated by a line. This ensures that the sought solution has numerical low-rank \cite{Beckermann2016}.

    \subsection{Fast methods for linear matrix equations}
    Linear matrix equations are well-studied since they arise in
    several areas, from control theory to PDEs. In our case the
    right-hand side is low-rank, and the structure in
    the matrices $A$ and $B$ allows to perform
    fast matrix vector multiplications and system solutions. For this
    reason, we choose to apply the \emph{extended Krylov subspace
    method} introduced in \cite{simoncini2007new}.

    This procedure constructs  orthonormal bases $U_s$
    and $V_s$ for the
    subspaces
	\begin{align*}
	  \mathcal{EK}_s(A, U) &= \mathrm{span}\{ U, A^{-1}U, AU, \ldots,
	    A^{s-1}U, A^{1-s}U \} \\
	  \mathcal{EK}_s(B^T, V) &= \mathrm{span}\{ V, B^{-T}V, B^T V, \ldots,
	    (B^T)^{s-1}V, (B^T)^{1-s}V \},
	\end{align*}
    by means of two extended block Arnoldi processes \cite{heyouni2010extended}.
    Then, the compressed
    equation $\tilde A_s X_s + X_s \tilde B_s = \tilde U \tilde V^T$
    is solved,  where
    $\tilde A_s = U_s^* A U_s$, $\tilde B_s = V_s^* B V_s$,
    $\tilde U = U_s^* U$, and $\tilde V = V_s^* V$. The latter
    equation is small scale ($s \times s$, with
    $s \ll n$), and can be solved
    using dense linear algebra.
    An approximation of the solution is finally provided by $U_s X_sV_s^*$.

    The complexity of the procedure depends on the convergence of the extended Krylov subspace method, which is related to the spectral properties of $A$ and $B$ \cite{beckermann2011error,Knizhnerman2011}. Under the simplified assumption that the Krylov method converges in a constant number of iterations, the overall complexity is determined by the precomputation of the LU factorization of $A$ and $B$, i.e. $\mathcal O(N\log^2N)$.

    A robust and efficient implementation of this technique requires
    some care, especially in the case where the rank in the right-hand side is larger than
    $1$. We refer to \cite{gutknecht2005block,simoncini2007new} for an overview of the
    numerical issues.
	
	\section{Numerical results and comparisons}\label{sec:exp}
	
	All the results in this section have been run on 
	 MATLAB R2017a, using a laptop with multithreaded Intel MKL BLAS
	and a i7-920 CPU with 18GB of RAM. The implementation of the
	fast HODLR arithmetic and the extended Krylov method can be found in \texttt{hm-toolbox} \cite{hm-toolbox}. The
	block Arnoldi process is taken from
	\texttt{rktoolbox} \cite{berljafa2015generalized}.
	
	The codes used for the tests are available at
	\url{https://github.com/numpi/fme}. They are organized
	as MATLAB m-files to allow replicating the results in this sections. We
	thank the authors of \cite{Breiten2014} who made their code public,
	allowing us to easily reproduce their results.

	\subsection{2D time-dependent equation with finite difference scheme}
	
	In this section we compare the use of rank-structured arithmetic
	embedded in the extended Krylov solver with the use of an
	appropriately preconditioned GMRES as proposed in \cite{Breiten2014}
	by Breiten, Simoncini, and Stoll. For the sake of simplicity, from now on
	we
	refer to the former method with the shorthand HODLR, and to the latter
	as BSS. This notation is also used in figures and tables where the performances
	are compared.
	
	The test problem is taken directly from \cite{Breiten2014}. The equation
	under consideration is \eqref{eq:2dmatrix} 
	with absorbing boundary conditions. The spatial domain is the square
	$[0, 1]^2$, and the source term $f$ is chosen as follows:
	\[
	  f(x,y,t) = 100 \cdot \left(\sin(10 \pi x) \cos(\pi y) +
	    \sin(10 t) \sin(\pi x) \cdot y (1 - y) \right).
	\]
	We consider two instances of this problem. The first one is a
	constant coefficient case, i.e., the
	diffusion coefficients ($d_{1,\pm}$ and $d_{2,\pm}$) are all equal to the
	constant $1$. In the second, instead, we choose them as follows:
    \begin{align}\label{eq:coeff_es}
   \begin{split}
      d_{1,+}(x) &= \Gamma(1.2) (1 + x)^{\alpha_1}, \qquad
      d_{1,-}(x) = \Gamma(1.2) (2 - x)^{\alpha_1},  \\
      d_{2,+}(y) &= \Gamma(1.2) (1 + y)^{\alpha_2}, \qquad
      d_{2,-}(y) = \Gamma(1.2) (2 - y)^{\alpha_2}.
    \end{split}
    \end{align}
    According to our discussion in Section~\ref{sec:matrix-eq},
    we know how to recast the space-time discretization
    in matrix equation form. More precisely, we consider the
    implicit Euler scheme in time with $\Delta t = 1$, and the Gr\"unwald-Letnikov
    shifted finite difference scheme for the space discretization, with
    a space step $\Delta x= \Delta y = \frac{1}{N+2}$. This yields a time stepping scheme
    that requires the solution
    of a Sylvester equation in the form \eqref{eq:sylv-timestepping} at each step.
    In particular, we
    note that the sampling of $f(x,y,t)$ on the discretization grid is of
    rank (at most) $2$ independently of the time.
    We performed $8$ time steps, coherently with the setup for the
    experiments used in \cite{Breiten2014}.

    The timings of the two approaches for the constant coefficient case
    are reported in
    Table~\ref{fig:results_breiten_1}
    for $\alpha_1 = 1.3, \alpha_2 = 1.7$, and
    in Table~\ref{fig:results_breiten_2} for $\alpha_1 = 1.7, \alpha_2 = 1.9$. The
    same tests in the non-constant coefficients setting have been performed,
    and the results are reported in Table~\ref{fig:results_breiten3} and Table~\ref{fig:results_breiten4}.

    The stopping criterion for the extended Krylov method has been set
    to $\epsilon := 10^{-6}$; this guarantees that the residual of the linear
    system will be smaller than $\epsilon$. The stopping criterion with the
    relative residual  for GMRES
    has been chosen as $10^{-7}$ 
    and the truncation tolerance for the operation in HODLR arithmetic (only used
    when assembling the matrices) to $10^{-8}$. 

    \begin{table}
    	\centering
    	\begin{minipage}{.5\linewidth} \centering
    		\pgfplotstabletypeset[
    		columns={0,1,5,2,3},
    		columns/0/.style = {column name = $N$},
    		columns/1/.style = {column name = $t_{\mathrm{HODLR}}$},
    		columns/2/.style = {column name = $\mathrm{rank}_{\epsilon}$},
    		columns/5/.style = {column name = $t_{\mathrm{BSS}}$},
    		columns/3/.style={column name=$\text{qsrank}_\epsilon$}
    		]{fd-times_13.dat}
    	\end{minipage}~\begin{minipage}{.45\linewidth} \centering
    	\bigskip
    	\begin{tikzpicture}
    	\begin{loglogaxis}[width = .95\linewidth, legend pos = north west,
    	  xlabel = $N$, ylabel = Time (s)]
    	\addplot table[x index = 0, y index = 1] {fd-times_13.dat};
    	\addplot table[x index = 0, y index = 5] {fd-times_13.dat};
    	\legend{HODLR, BSS}
    	\end{loglogaxis}
    	\end{tikzpicture}
    \end{minipage}
    	\caption{Timings for the solution of the problem \eqref{eq:2dmatrix}
    		in the constant coefficient case using the HODLR
    		solver and the approach presented in \cite{Breiten2014}.
    		The exponents are set to $\alpha_1 = 1.3$ and $\alpha_2 = 1.7$.
    		The times reported are expressed in seconds. }
    	\label{fig:results_breiten_1}
    \end{table}

    In Figure~\ref{fig:solution-prob1} we report a plot of
    the final solution of the constant coefficient problem
    at time $7$; the parameters in the figure are
    $\alpha_1 = 1.3$ and $\alpha_2 = 1.7$. The field $\text{rank}_\epsilon$
    indicates the numerical rank of the solution, whereas $\text{qsrank}_\epsilon$
    denotes the numerical quasiseparable rank of the discretization matrices. The
    latter increase proportionally to $\log(N)$, in this and
    in the following examples, as predicted by the theory. 

    \begin{figure}
    	\centering
    	\includegraphics[width=.49\linewidth]{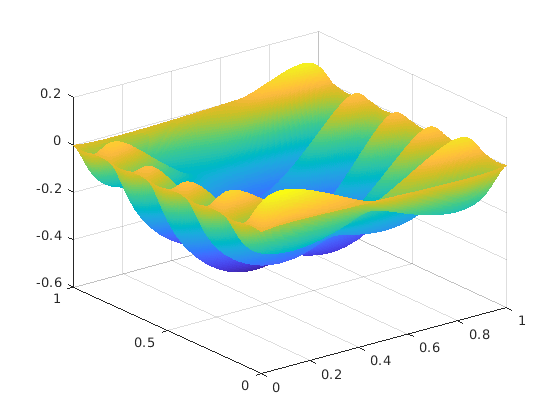}~\includegraphics[width=.49\linewidth]{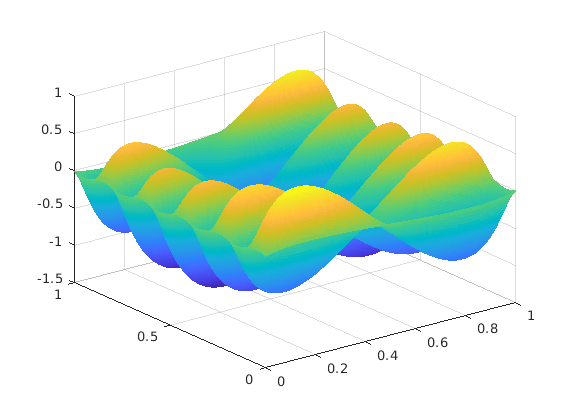}
    	\caption{Solutions of the problem \eqref{eq:2dmatrix} in the constant
    		coefficient case. On the left, the
    		problem with $\alpha_1 = 1.7, \alpha_2 = 1.9$. On the right,
    		the one with $\alpha_1 = 1.3, \alpha_2 = 1.7$. The solutions
    		are plotted at the final time step $t = 7$.}
    	\label{fig:solution-prob1}
    \end{figure}

        \begin{table}
        	\centering
        	\begin{minipage}{.5\linewidth} \centering
        		\pgfplotstabletypeset[
        		columns={0,1,5,2,3},
        		columns/0/.style = {column name = $N$},
        		columns/1/.style = {column name = $t_{\mathrm{HODLR}}$},
        		columns/2/.style = {column name = $\mathrm{rank}_{\epsilon}$},
        		columns/5/.style = {column name = $t_{\mathrm{BSS}}$},
        		columns/3/.style={column name=$\text{qsrank}_\epsilon$}
        		]{fd-times_24.dat}
        	\end{minipage}~\begin{minipage}{.45\linewidth} \centering
    	\bigskip
    	\begin{tikzpicture}
    	\begin{loglogaxis}[width = .95\linewidth, legend pos = north west,
    	xlabel = $N$, ylabel = Time (s)]
        		  \addplot table[x index = 0, y index = 1] {fd-times_24.dat};
        		  \addplot table[x index = 0, y index = 5] {fd-times_24.dat};
        		  \legend{HODLR, BSS}
        		\end{loglogaxis}
        	\end{tikzpicture}
        \end{minipage}
        \caption{Timings for the solution of the problem \eqref{eq:2dmatrix}
        	with constant coefficients as in \eqref{eq:coeff_es} using the HODLR
        	solver and the approach presented in \cite{Breiten2014}.
        	The exponents are set to $\alpha_1 = 1.7$ and $\alpha_2 = 1.9$. }
        \label{fig:results_breiten_2}
    \end{table}

    We note that the HODLR solver outperforms the BSS approach in all our tests,
    although the advantage is slightly reduced when $N$ increases. For solving Toeplitz linear systems,
    we have used the preconditioner in \cite{DMS}, which turned out to perform better than
    the circulant one.
    However,
    when considering the the nonconstant coefficients case,
    a small growth in the number of iterations can be seen as $N$ increases. In
    particular, the preconditioner based on the first derivative $P_1$ seems to be less robust, while $P_2$ is less
    sensitive to changes in the coefficients, the time-step,
    and other parameters. In this example,
    it turned out that $P_2$ is always the most efficient choice, even when
    $\alpha_1$ is as low as $1.3$, and therefore it is the one employed in the
    tests of the BSS method.

    Note that in the case with $\alpha_1 = 1.3$
    the preconditioner $P_2$ works
    well (the number of iterations does not increase with $N$), but the number of
    iterations is not particularly low (typical figures are in the range of 15 to 20,
    sometimes more);
    therefore, this case
    is particularly favorable to the HODLR approach, which indeed outperforms
    BSS by a factor of about $6$ in time at the
    larger tested size, $N = \num{65536}$. The results are reported
    in Table~\ref{fig:results_breiten_1} and Table~\ref{fig:results_breiten3}.

    \begin{table}
    	\centering
    	\begin{minipage}{.5\linewidth} \centering
    		\pgfplotstabletypeset[
    		columns={0,1,5,2,3},
    		columns/0/.style = {column name = $N$},
    		columns/1/.style = {column name = $t_{\mathrm{HODLR}}$},
    		columns/2/.style = {column name = $\mathrm{rank}_{\epsilon}$},
    		columns/5/.style = {column name = $t_{\mathrm{BSS}}$},
    		columns/3/.style={column name=$\text{qsrank}_\epsilon$}
    		]{fd-times-vc_13.dat}
    	\end{minipage}~\begin{minipage}{.45\linewidth} \centering
    	\bigskip
    	\begin{tikzpicture}
    	\begin{loglogaxis}[width = .95\linewidth, legend pos = north west,
    	xlabel = $N$, ylabel = Time (s)]
    	\addplot table[x index = 0, y index = 1] {fd-times-vc_13.dat};
    	\addplot table[x index = 0, y index = 5] {fd-times-vc_13.dat};
    	\legend{HODLR, BSS}
    	\end{loglogaxis}
    	\end{tikzpicture}
    \end{minipage}
    \caption{Timings for the solution of the problem \eqref{eq:2dmatrix}
    	with variable coefficients as in \eqref{eq:coeff_es} using the HODLR
    	solver and the approach presented in \cite{Breiten2014}.
    	The exponents are set to $\alpha_1 = 1.3$ and $\alpha_2 = 1.7$.
    	The times reported are expressed in seconds. }
    \label{fig:results_breiten3}
\end{table}

\begin{table}
	\centering
	\begin{minipage}{.5\linewidth} \centering
		\pgfplotstabletypeset[
		columns={0,1,5,2,3},
		columns/0/.style = {column name = $N$},
		columns/1/.style = {column name = $t_{\mathrm{HODLR}}$},
		columns/2/.style = {column name = $\mathrm{rank}_{\epsilon}$},
		columns/5/.style = {column name = $t_{\mathrm{BSS}}$},
		columns/3/.style={column name=$\text{qsrank}_\epsilon$}
		]{fd-times-vc_24.dat}
	\end{minipage}~\begin{minipage}{.45\linewidth} \centering
    	\bigskip
    	\begin{tikzpicture}
    	\begin{loglogaxis}[width = .95\linewidth, legend pos = north west,
    	xlabel = $N$, ylabel = Time (s)]
	\addplot table[x index = 0, y index = 1] {fd-times-vc_24.dat};
	\addplot table[x index = 0, y index = 5] {fd-times-vc_24.dat};
	\legend{HODLR, BSS}
	\end{loglogaxis}
	\end{tikzpicture}
\end{minipage}
\caption{Timings for the solution of the problem \eqref{eq:2dmatrix}
	with variable coefficients as in \eqref{eq:coeff_es} using the HODLR
	solver and the approach presented in \cite{Breiten2014}.
	The exponents are set to $\alpha_1 = 1.7$ and $\alpha_2 = 1.9$. }
\label{fig:results_breiten4}
\end{table}

\subsection{A 2D finite element discretization}\label{sec:ex-fe}

The example considered in this section is given by problem \eqref{eq:2dmatrix} on the domain $[0, 1]^2$ in the
constant coefficients case and with source term, and solution at time $0$ defined
as follows:
\[
  f(x,y,t) = \begin{cases}
   1 & (x,y) \in H \\
   0 & \text{otherwise} \\
  \end{cases}, \qquad
  u(x,y,0) = f(x,y,0), \qquad
  H = \left\{ \frac{3}{8} \leq x, y\leq \frac{5}{8} \right\},
\]
similarly to what is done in \cite{duan2015finite}. The fractional
exponents are chosen as $\alpha_1 = 1.3$ and $\alpha_2 = 1.7$.
We used the piecewise linear functions described in \cite{duan2015finite} as basis for the finite element discretization.
For simplicity, we consider a uniform grid for the nodes defining
the hat functions, which yields
a Toeplitz matrix whose symbol is explicitly\footnote{
	The formula given in \cite{lin2017finite} is not numerically stable,
	and gives rise to severe cancellation errors if used to compute
	element far from the diagonal. However, it can be easily stabilized
	performing a series expansion of the terms involved, and
	removing the terms that are known to cancel out; this yields
	an expression as a convergent series; the latter can be
	efficiently evaluated by truncation, and this is what we
	have done in our implementation.} given in \cite{lin2017finite}.
Therefore, the same strategy used in the previous section can be used
to recover a rank structured representation of the matrix,
which is guaranteed to exist thanks to Theorem~\ref{thm:structure-riesz-fem}.

We consider the time step of $\Delta t = 0.1$, and the discretization in
time is done by the backward Euler method. The truncation thresholds
are set exactly as in the previous example. We have performed tests
changing the number of grid points used in each direction, and the
timings are reported in Table~\ref{fig:results_fe_1}.

\begin{table}
	\centering
	\begin{minipage}{.5\linewidth} \centering
		\pgfplotstabletypeset[
		columns={0,1,5,2,3},
		columns/0/.style = {column name = $N$},
		columns/1/.style = {column name = $t_{\mathrm{HODLR}}$},
		columns/2/.style = {column name = $\mathrm{rank}_{\epsilon}$},
		columns/5/.style = {column name = $t_{\mathrm{BSS}}$},
		columns/3/.style={column name=$\text{qsrank}_\epsilon$}
		]{fe-times_13.dat}
	\end{minipage}~\begin{minipage}{.45\linewidth} \centering
	\bigskip
	\begin{tikzpicture}
	\begin{loglogaxis}[width = .95\linewidth, legend pos = north west,
	xlabel = $N$, ylabel = Time (s)]
	\addplot table[x index = 0, y index = 1] {fe-times_13.dat};
	\addplot table[x index = 0, y index = 5] {fe-times_13.dat};
	\legend{HODLR, BSS}
	\end{loglogaxis}
	\end{tikzpicture}
\end{minipage}
\caption{Timings for the solution of the problem \eqref{eq:2dmatrix}
	using a finite element discretization.
	The exponents are set to $\alpha_1 = 1.3$ and $\alpha_2 = 1.7$. }
\label{fig:results_fe_1}
\end{table}

We notice that the timings behave linearly, and the rank
of the solution stabilizes around $14$. Since the matrix equation
has Toeplitz coefficients, we can directly compare with the
approach by Breiten, Simoncini, and Stoll in \cite{Breiten2014}. A spectral reasoning similar to the one in \cite{DMS} justifies the use of preconditioner $P_2$ for solving Toeplitz linear systems also in the finite elements setting. As for the finite
differences, the use of rank structures turns out to be more efficient. However,
we stress that our method remains applicable without modifications
even if one considers non-uniform grids, in contrast to the
BSS approach. The only difference is in
the construction of the rank structured representation, which needs
to be performed relying on Theorem~\ref{thm:structure-riesz-fem}.

\section{Conclusions and outlook}\label{sec:concl}

In this paper we have presented a rigorous theoretical analysis of the rank
of the off-diagonal blocks in 1D discretizations of fractional differential operators.
We have analyzed different formulations, namely the Gr\"unwald-Letnikov (shifted and
non-shifted) finite difference schemes, as well as finite element approaches.
In the latter class, we have shown that the stiffness matrix of the finite element
discretization is rank structured under  mild hypotheses on the finite element
basis.

We have then shown that it is possible to obtain parametrizations of such rank
structures very efficiently in the HODLR format,
and this can be used to solve 1D fractional
differential equations (possibly with time dependence), as well as to analyze a
broad range of 2D problems where the differential operator is separable,
and thus the equation can be recast in matrix form \cite{Breiten2014}.
This includes, but is not limited to, fractional diffusion equations.

In our numerical experiments we have shown that HODLR-based solvers
often outperform previous approaches relying on Toeplitz-structured preconditioners.
 This is particularly advantageous in the 2D setting, where
in the projection scheme used to deal with the matrix equation one needs to solve several
linear systems, and the computation of the LU factorization in HODLR format
can be amortized among more operations. 

The machinery extends to 2D equations whose associated matrix equation has a right-hand side in the HODLR format.  In this case, it is necessary to replace the extended Krylov method with the divide and conquer technique presented in \cite{kressner2017low}.

Further improvements can be achieved replacing the HODLR format with more sophisticated structures that rely on nested bases for the representation of the off-diagonal blocks, as HSS and $\mathcal H^2$ matrices \cite{chandra,hackbusch2015hierarchical}. This would remove some log factors from the asymptotic complexity of time and memory consumption, and
might be subject of future work.

\section*{Acknowledgment}

The authors wish to thank the CIRM (Centre International de Rencontres Mathématiques) in Luminy, France, which supported
a ``Research in Pairs'' on the
topic of fast methods for fractional differential equations.
Part of the work presented in this paper is a result of that meeting.

\bibliographystyle{abbrv}
\bibliography{library}
\end{document}